\documentclass[journal]{IEEEtran}
%


\usepackage{amssymb}
\usepackage[pdftex]{graphicx}
\usepackage{tikz}
\usepackage{framed}
\usepackage{bm}
\usepackage{cite}
\usepackage{comment}
\usepackage{algorithmic}
\usepackage{algorithm}
\usepackage{enumerate}
\usepackage{amsmath}
\usepackage{amsfonts}
\usepackage{color}
\usepackage{cases}

\thispagestyle{empty}
\newtheorem{theorem}{Theorem}

\newtheorem{remark}{Remark}
\newtheorem{lemma}{Lemma}
\newtheorem{corollary}{Corollary}

\newtheorem{proof}{Proof}

\usepackage{blkarray}


\newcommand{\argmin}{\mathop{\rm argmin}\limits}

\newcommand{\paren}[1]{\left(#1\right)}

\DeclareMathOperator{\rank}{rank}
\DeclareMathOperator{\diag}{diag}
\DeclareMathOperator{\tr}{tr}

\newcommand{\E}{\mathrm{e}}
\newcommand{\D}{\mathrm{d}}

\newcommand{\Real}{\mathbb{R}}

\newcommand{\bb}{\mathbb}

\usepackage{blkarray}

\usepackage{latexsym}
\def\qed{\hfill $\Box$} 



%

%
\usepackage{cite}
\usepackage{amsmath}
\usepackage{url}


\hyphenation{op-tical net-works semi-conduc-tor}

\begin{document}
%
\title{Controllability scores for selecting control nodes of large-scale network systems}
\author{Kazuhiro Sato and Shun Terasaki\thanks{K. Sato and S. Terasaki are with the Department of Mathematical Informatics, Graduate School of Information Science and Technology, The University of Tokyo, Tokyo 113-8656, Japan, email: kazuhiro@mist.i.u-tokyo.ac.jp (K. Sato), t.tera1217@gmail.com (S. Terasaki) }}
\maketitle
\thispagestyle{empty}
\pagestyle{empty}

\begin{abstract}
To appropriately select control nodes of a large-scale network system,
we propose two control centralities called volumetric and average energy controllability scores.
The scores are the unique solutions to convex optimization problems formulated using the controllability Gramian.
The uniqueness is proven for stable cases and for unstable cases that include multi-agent systems.
We show that the scores can be efficiently calculated by using a proposed algorithm based on the projected gradient method onto the standard simplex.
Numerical experiments
demonstrate that
the proposed algorithm is more efficient than an existing interior point method, and
the proposed scores can correctly capture the importance of each state node on controllability, outperforming
 existing control centralities.
\end{abstract}

\begin{IEEEkeywords}
centrality, controllability, convex optimization, large-scale system,  projected gradient method
\end{IEEEkeywords}

\IEEEpeerreviewmaketitle

\section{Introduction} \label{sec:intro}

To control large-scale network systems such as infection networks \cite{chaharborj2022controlling, zhu2021connectedness} and brain networks \cite{tang2018colloquium}, a selection of control nodes is a crucial step and seriously affects the system performance.
In this context, linear system $\dot{x}(t) = Ax(t)$
with state $x(t) =(x_1(t),\ldots, x_n(t))\in \Real^n$ and constant matrix $A\in \Real^{n\times n}$
can be regarded as a network system.
In fact, we can think that 
$A$ defines a network structure and
$x_1,\ldots, x_n$ represent the state nodes associated with the network.
Then, the selection problem can be interpreted as a problem to design a matrix $B\in {\bb R}^{n\times m}$ that satisfies
desired properties of
\begin{align}
    \label{eq:lti}
    \dot{x}(t) = Ax(t) + Bu(t).
\end{align}
In the selection problem, the following metrics using the Controllability Gramian (CG) have been frequently used to quantitatively evaluate the controllability of system \eqref{eq:lti}:
\begin{itemize}
    \item Trace of CG 
\cite{ikeda2018sparsity, romao2018distributed, sato2020controllability, she2021energy, summers2015submodularity, summers2016convex, fitch2016optimal, lindmark2021centrality}.
    \item Trace of Inverse CG \cite{she2021energy, summers2015submodularity, summers2016convex, fitch2016optimal, lindmark2021centrality}.
    \item Log Determinant of CG \cite{she2021energy, summers2015submodularity, summers2016convex, fitch2016optimal}.
    \item Smallest Eigenvalue of CG \cite{pasqualetti2014controllability, summers2015submodularity, summers2016convex, fitch2016optimal, bof2016role, lindmark2021centrality}.
\end{itemize}

Moreover, 
 the issue of selecting appropriate nodes in network systems is linked to the challenge of calculating node centralities. In the field of network science, centrality measures provide a way to quantify the relative importance of each node within the network, as detailed in \cite{rodrigues2019network}.
Specifically, by assessing these centrality values, we can identify the most influential or pivotal state nodes. These nodes are prime candidates for control nodes, due to their significant impact on the network systems.
However, as explained in \cite{summers2015submodularity} and further discussed  in Section \ref{Sec6-B},
many of the existing centrality measures fall short when applied to dynamical system \eqref{eq:lti}.
This discrepancy highlights the need for novel centrality measures.

To establish a more effective method for assessing the significance of each state node in
 dynamical network system \eqref{eq:lti},
 we introduce novel selection problems. 
These problems are grounded in continuous optimization approaches, characterized by objective functions such as the log determinant of CG and the trace of inverse CG.
This methodology aims to enhance the precision in identifying and prioritizing nodes based on their impact within the systems.
The main difference between our problem and other existing problems in \cite{ikeda2018sparsity,  pasqualetti2014controllability, romao2018distributed, sato2020controllability, she2021energy, summers2015submodularity, fitch2016optimal, bof2016role, lindmark2021centrality} 
is the assumption of the matrix $B$ in \eqref{eq:lti}.
Unlike existing works, we design $B$ by solving our optimization problems under the assumption that $B$ is diagonal with size $n$.
Although the diagonal assumption has already been introduced in \cite{olshevsky2014minimal, olshevsky2015minimum, olshevsky2017non}, 
we present the first interpretation of the assumption, emphasizing the importance of each state node.
The interpretation guarantees that we can use the solutions to our optimization problems as novel control centralities.
Notably, our problems essentially coincide with continuous relaxation problems considered in \cite{summers2016convex}
if the matrix $A$ in \eqref{eq:lti} is stable, i.e., the real parts of all the eigenvalues of $A$ are negative.
However, our problem formulations can be easily extended to unstable cases, which include multi-agent systems, as explained in Section \ref{Sec_unstable}.

The contributions of this paper are summarized as follows.
\begin{itemize}
    \item We propose novel concepts called Volumetric Controllabiity Score (VCS) and Average Energy Controllabiity Score (AECS).
    The scores quantitatively evaluate the importance of each state node on controllability.

    \item We show that under the assumption that the matrix $A$ in \eqref{eq:lti} is stable, \textcolor{black}{VCS and AECS uniquely exist in Theorems \ref{thm:unique_existence} and \ref{thm:strictlyconvex2}.}
    For unstable cases, we prove that the modifications of the VCS and AECS uniquely exist in Theorems \ref{thm:unstable_realeigen} and \ref{Thm_Laplacian}.
    
    \item We propose a simple optimization method with a theoretical guarantee, as shown in Theorem \ref{main_thm}, based on a projected gradient method.
    The computational complexity of the method is considerably smaller than the path following method \cite{vandenberghe1998determinant}, which is an interior point method, as explained in Section \ref{Sec5} \textcolor{black}{and demonstrated in Section \ref{Sec6-C}}.
    That is, our proposed method can be applied to large-scale systems unlike the path following method.
    
    \item We compare the \textcolor{black}{VCS, AECS,} and existing control centralities proposed in \cite{summers2015submodularity, liu2012control} and explain why the \textcolor{black}{VCS and AECS} are better than
    other control centralities in Section \ref{Sec6-B}.
\end{itemize}

The remainder of this paper is organized as follows.
In Section \ref{sec:prelim}, we introduce the concepts of CG and controllability ellipsoid,
and explain the relation.
In Sections \ref{sec:formulation} and \ref{Sec_unstable}, we formulate convex optimization problems and provide the definitions of VCS and AECS as the solutions.
The uniqueness of the solutions is proved for stable cases in Section \ref{sec:formulation} and for unstable cases in Section \ref{Sec_unstable}.
In Section \ref{Sec5}, we propose an algorithm with a convergence guarantee for solving the problems based on the projected gradient method.
In Section \ref{Sec6}, 
we demonstrate the effectiveness of our proposed algorithm compared with an interior point method.
Moreover,
we compare the proposed controllability scores with existing control centralities.
Finally, our conclusions are presented in Section \ref{sec:conclusion}.


{\it Notation:}
 The set of real numbers is denoted by ${\bb R}$.
 For a matrix $A\in {\bb R}^{m\times n}$, we define $\|A\|_{\rm F}$ as the Frobenius norm
$\|A\|_{\rm F}:=\sqrt{{\rm tr}(A^{\top}A)}$,
where $A^{\top}$ denotes the transpose of $A$, and
${\rm tr}(X)$ denotes the diagonal sum for a squared matrix $X$. 
Given a vector $v = (v_i)\in {\bb R}^n$, $\|v\|$ and ${\rm diag} (v_1,\ldots, v_n)$ denote the usual Euclidean norm $\|v\|=\sqrt{v^{\top}v}$ and the diagonal matrix with the diagonal elements $v_1,\ldots, v_n$, respectively.
Instead of ${\rm diag} (v_1,\ldots, v_n)$, we also use ${\rm diag} (v)$.
For a symmetric matrix $A$,
the symbols $A\succeq O$ and $A\succ O$ denote the positive semi-definie and definite matrices, respectively.
The symbol $L^2[0,T]$ is the set of all measurable functions $u\colon [0,T]\to \Real^m$ that satisfy $\int_0^T \|u(t)\|^2 \D t < \infty$.

\section{Preliminaries}
\label{sec:prelim}

In this section, we summarize the concepts of CG and controllability ellipsoid to formulate our problems.

The finite-time CG of system \eqref{eq:lti} is defined as 
\begin{align*}
    \mathcal{W}(T) := \int_0^T \exp(At) BB^\top \exp(A^\top t) \ \D t \succeq O, 
\end{align*}
where $T>0$.
System \eqref{eq:lti} is controllable if and only if $\mathcal{W}(T)\succ O$ for all $T > 0$.
If the matrix $A$ of system \eqref{eq:lti} is stable, $\mathcal{W}(T)$ has a limit with $T\to \infty$ and the limit $\mathcal{W} := \lim_{T\to\infty} \mathcal{W}(T)$ satisfies Lyapunov equation
\begin{align}
    A\mathcal{W} + \mathcal{W} A^\top = -BB^\top. \label{eq:lyapunov}
\end{align}
Then, system \eqref{eq:lti} is controllable if and only if {$\mathcal{W} \succ O$}.

If system \eqref{eq:lti} is controllable,
the reachable space of system \eqref{eq:lti} defined as 
\begin{align*}
    \mathcal{E}(T) = \left\{ x(T) \in \Real^n \left|
    \begin{array}{l}
        u\in L^2[0, T],\,
       \int_0^T \|u(t)\|^2 \D t\leq 1,   \\
        \dot{x}(t) = Ax(t) + Bu(t),\,
        x(0) = 0
    \end{array}
    \right. \right\}
\end{align*}
 can be expressed by
\begin{align}
    \label{eq:controllability_ellipsoid}
    \mathcal{E}(T) = \{y\in\Real^n\mid y^\top \mathcal{W}(T)^{-1}y \leq 1\},
\end{align} 
as shown in \cite[Section 4.3]{dullerud2013course}. 
The volume of $\mathcal{E}(T)$ is proportional to $\sqrt{\det \mathcal{W}(T)}$, as shown in \cite[Corollary 12.15]{vishnoi2021}.
Moreover, if the matrix $A$ in \eqref{eq:lti} is stable,
we can consider $\mathcal{E}(T)$ with $T\to\infty$, and \eqref{eq:controllability_ellipsoid} can be replaced with 
\begin{align*}
\mathcal{E} = \{y\in\Real^n \mid y^\top \mathcal{W}^{-1} y\leq 1\}.
\end{align*}
We call $\mathcal{E}(T)$ and $\mathcal{E}$ controllability ellipsoids.


\section{Controllability scoring problem and controllability score}
\label{sec:formulation}

In this section, we introduce the concepts of VCS and AECS, which are novel control centralities.
Throughout this section, we assume the following:
\begin{enumerate}[(i)]
    \item The matrix $A$ of system \eqref{eq:lti} is known.
\item The matrix $B$ is of the form 
\begin{align}
B={\rm diag}(\sqrt{p_1},\ldots, \sqrt{p_n}). \label{B_form}
\end{align}
    \item The matrix $A$ is stable.
\end{enumerate}

The assumption (i) means that the matrix $A$ must be pre-identified.
The assumption (ii) means that input node $u_i$ directly affects state node $x_i$ if $p_i>0$. 
The assumption (iii) is made to simplify discussions and is removed in Section \ref{Sec_unstable}.

Under the assumptions (i), (ii), and (iii), by introducing a nonnegative vector $p := (p_1,\dots, p_n)$ and using the relation $BB^\top = \diag(p)$, the solution to \eqref{eq:lyapunov} can be expressed as
\begin{align}
    W_{\rm con}(p) &:= \sum_{i=1}^n p_i W_i, \label{eq:pgramian}
\end{align}
where $W_i$ is the CG defined as
\begin{align}
    \label{eq:horizon_gramian}
    W_i := \int_0^\infty \exp(At) e_ie_i^\top \exp(A^\top t) \ \D t.
\end{align}
Here, $e_i\in\Real^n$
 denotes the standard vector, which has $1$ at $i$-th position and zeros at other positions. 

\subsection{Volumetric Controllability Score (VCS)} \label{Sec3A}
We consider a problem that maximizes $\log\det W_{\rm con}(p)$, because $\sqrt{\det W_{\rm con}(p)}$ is proportional to the volume of the controllability ellipsoid $\mathcal{E}$, as explained in Section \ref{sec:prelim}.
Because input node $u_i$ of system \eqref{eq:lti} with \eqref{B_form}
 has a one-to-one correspondence with state node $x_i$, we can expect that $p$ contains information on the importance of each state node on controllability. 
However, if there is no constraint on $p$, the volume can become infinitely large.
That is, unless constraints are placed on $p$, it is difficult to relate $p$ to the importance of each state node on controllability.

Thus, we consider the following optimization problem.
\begin{framed}
\vspace{-1em}
\begin{align}
    \label{prob:org}
    \begin{aligned}
        &&& \text{minimize} && f(p) := -\log\det W_{\rm con}(p) \\
        &&& \text{subject to} && p \in \mathcal{F} := X\cap \Delta.
    \end{aligned}
\end{align}
\vspace{-1em}
\end{framed}

\noindent
Here,
\begin{align}
    X & := \{p\in\Real^n \mid W_{\rm con}(p)\succ O\}, \nonumber 
    \\
    \Delta & := \left\{ p = (p_i) \in \Real^n \left|
    \begin{array}{l}
         \sum_{i=1}^n p_i = 1,    \\
         0 \leq p_i \quad (i=1,\dots,n)  
    \end{array}
    \right. \right\}, \label{eq:simplex}
\end{align}
where $X$ is an open set in $\Real^n$, because the map $p\in \Real^n \mapsto W_{\rm con}(p)$ is continuous.
Moreover, $X$ and $\Delta$ are convex sets in $\Real^n$.
The constraint $p\in X$ is needed for defining the objective function $f(p)$.
    In other words, if $p\not\in X$, we cannot use the objective function.
    Here, $p\in X$ means that system \eqref{eq:lti} with \eqref{B_form} is controllable.

Moreover, if the optimal solution $p^*$ to problem \eqref{prob:org} uniquely exists,
each element of $p^*$ can be interpreted as
the relative importance assigned to each state node from the viewpoint of controllability. This is because the assignment is under the conditions that
$p^*$ is restricted on $\Delta$ and the volume of the controllability ellipsoid $\mathcal{E}$ is maximized.
Thus, by referring to $p^*$, we can select control nodes.

In the following, we show that the optimal solution to problem \eqref{prob:org} uniquely exists (Theorem \ref{thm:unique_existence}).
However, this is not trivial,
because 
the feasible region $\mathcal{F}$ is neither closed nor open.
In fact, consider
    $A = \begin{pmatrix}
        -1 & 0 \\
        1 & -1
\end{pmatrix}$.
In this case, $A$ is stable and CG \eqref{eq:pgramian} is given by
    $W_{\rm con}(p) = p_1\begin{pmatrix}
        \frac{1}{2} & \frac{1}{4} \\
        \frac{1}{4} & \frac{1}{4}
    \end{pmatrix} +
    p_2\begin{pmatrix}
        0 & 0 \\
        0 & \frac{1}{2}
    \end{pmatrix}$.
Therefore, the feasible region $\mathcal{F}$ can be expressed as 
    $\mathcal{F} = \left\{ \begin{pmatrix}
    p_1 \\
    p_2
    \end{pmatrix}
     \in\Real^2 \ \middle| \  p_1 > 0, \ p_2 \geq 0,\ p_1+p_2 = 1 \right\}$, which is neither closed nor open in $\Real^2$.

However, for any $p^{(0)}\in \mathcal{F}$,
we can define
\begin{align}
    \mathcal{F}_0 := \{p\in \Real^n\mid f(p) \leq f(p^{(0)})\} \cap \mathcal{F}, \label{def:F0}
\end{align}
and have the following lemma.

\begin{lemma} \label{Lem_F0}
For any $p^{(0)}\in \mathcal{F}$,
\begin{align}
    \mathcal{F}_0=\{p\in \Real^n\mid f(p) \leq f(p^{(0)})\} \cap \Delta, \label{eq_F0}
\end{align}
and $\mathcal{F}_0$ is a closed, bounded, and convex set in ${\bb R}^n$.
\end{lemma}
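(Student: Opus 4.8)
The plan is to prove the set equality \eqref{eq_F0} first, and then derive closedness, boundedness, and convexity of $\mathcal{F}_0$ from it. For the set equality, the nontrivial inclusion is $\{p \mid f(p) \le f(p^{(0)})\} \cap \Delta \subseteq \mathcal{F}_0$; the reverse inclusion is immediate since $\mathcal{F} = X \cap \Delta \subseteq \Delta$. So suppose $p \in \Delta$ with $f(p) \le f(p^{(0)})$. The key observation is that $f(p) = -\log\det W_{\mathrm{con}}(p)$ is only finite when $W_{\mathrm{con}}(p) \succ O$, i.e. when $p \in X$; if $p \in \Delta \setminus X$ then $W_{\mathrm{con}}(p)$ is singular (it is always positive semidefinite by \eqref{eq:pgramian}–\eqref{eq:horizon_gramian}), so $\det W_{\mathrm{con}}(p) = 0$ and we should read $f(p) = +\infty$. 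Since $p^{(0)} \in \mathcal{F} \subseteq X$, we have $f(p^{(0)}) < \infty$, so the condition $f(p) \le f(p^{(0)})$ forces $f(p) < \infty$, hence $p \in X$, hence $p \in X \cap \Delta = \mathcal{F}$. This gives $p \in \mathcal{F}_0$ and establishes \eqref{eq_F0}. (One should state clearly the convention that $f$ takes the value $+\infty$ off $X$, or equivalently argue directly: $W_{\mathrm{con}}(p) \succeq O$ always, and $\det$ is continuous, so a sequence in $X$ with $\det \to 0$ cannot keep $f$ bounded.)

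Given \eqref{eq_F0}, convexity is straightforward: $\Delta$ is convex (stated in the excerpt), and $f$ restricted to $X$ is convex — this is the standard fact that $p \mapsto -\log\det(\sum_i p_i W_i)$ is convex on the set where the sum is positive definite (composition of the convex function $-\log\det$ on the positive-definite cone with the affine map $p \mapsto W_{\mathrm{con}}(p)$). Hence the sublevel set $\{p \mid f(p) \le f(p^{(0)})\}$ is convex, and its intersection with $\Delta$ is convex. Boundedness is immediate because $\mathcal{F}_0 \subseteq \Delta$ and $\Delta$ is bounded (it is the standard simplex, contained in the unit $\ell_1$-ball).

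The main obstacle is closedness, because $\mathcal{F}$ itself is not closed, as the worked $2\times 2$ example in the excerpt shows — so we genuinely need the sublevel-set restriction to recover closedness. The plan is: let $\{p^{(k)}\} \subseteq \mathcal{F}_0$ converge to some $\bar p \in \Real^n$. Since $\Delta$ is closed, $\bar p \in \Delta$. It remains to show $f(\bar p) \le f(p^{(0)})$, which (by the equality just proven) will place $\bar p \in \mathcal{F}_0$. Here I would use that $p \mapsto W_{\mathrm{con}}(p)$ is continuous and that $\log\det$ is upper semicontinuous on the closed cone of positive semidefinite matrices (it is continuous on the positive-definite cone and tends to $-\infty$ at singular matrices, so it is usc everywhere on $\{X \succeq O\}$ with the value $-\infty$ allowed). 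Therefore $f(p) = -\log\det W_{\mathrm{con}}(p)$ is lower semicontinuous, giving $f(\bar p) \le \liminf_k f(p^{(k)}) \le f(p^{(0)})$. This shows $\bar p \in \{p \mid f(p) \le f(p^{(0)})\} \cap \Delta = \mathcal{F}_0$, so $\mathcal{F}_0$ is closed. Combining, $\mathcal{F}_0$ is closed, bounded, and convex, completing the proof.
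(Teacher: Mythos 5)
Your proposal is correct and follows essentially the same route as the paper: the set equality via the observation that $f$ is only defined (finite) on $X$, convexity and boundedness inherited from the sublevel set of the convex function $f$ and from $\Delta$, and closedness via a limiting argument on sequences in the sublevel set. In fact your closedness step is slightly more careful than the paper's: you invoke lower semicontinuity of $f$ (with the convention $f=+\infty$ off $X$) to rule out limit points on the boundary of $X$, whereas the paper simply writes $f(\lim_k p_k)=\lim_k f(p_k)$, which tacitly assumes the limit already lies in $X$ where $f$ is continuous.
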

\begin{proof}
See Appendix \ref{Ape_B}. \qed
\end{proof}

Problem \eqref{prob:org} is a minimization problem, and has a feasible solution $p^{(0)}\in \mathcal{F}$.
In fact, $\left(1/n,\dots, 1/n\right)\in \mathcal{F}$.
Thus, problem \eqref{prob:org} is equivalent to
\begin{framed}
\vspace{-1em}
\begin{align}
    \label{prob:org2}
    \begin{aligned}
        &&& \text{minimize} && f(p) \\
        &&& \text{subject to} && p \in \mathcal{F}_0.
    \end{aligned}
\end{align}
\vspace{-1em}
\end{framed}

Moreover,
the objective function $f(p)$ of problems \eqref{prob:org} and \eqref{prob:org2} is twice-differentiable on the open set $X$, and for $p\in X$,
the gradient $\nabla f(p)$ and Hessian $\nabla^2 f(p)$ are given by
\begin{align}
    \left(\nabla f(p)\right)_{i} & = -\tr \left(W_{\rm con}(p)^{-1} W_i \right), \label{eq:deriv} \\
    \left(\nabla^2 f(p)\right)_{ij} & = \tr \left(W_{\rm con}(p)^{-1} W_i W_{\rm con}(p)^{-1} W_j \right), \label{eq:sndderiv}
\end{align}
respectively.
Using \eqref{eq:sndderiv}, we have the following theorem.
\begin{theorem}
    \label{thm:strictlyconvex}
    The objective function $f(p)$ of problems \eqref{prob:org} and \eqref{prob:org2} is strictly convex on $\mathcal{F}$.
\end{theorem}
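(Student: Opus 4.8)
The plan is to prove that the Hessian $\nabla^2 f(p)$ in \eqref{eq:sndderiv} is positive definite at every point of the open set $X$; since $\mathcal{F} = X \cap \Delta$ is a convex subset of $X$, strict convexity of $f$ on $\mathcal{F}$ then follows by restricting $f$ to the segment between any two distinct feasible points.

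First I would fix $p \in X$, an arbitrary $v = (v_1,\dots,v_n) \in \Real^n$, and abbreviate $W := W_{\rm con}(p) \succ O$ and $M := \sum_{i=1}^n v_i W_i$, a symmetric matrix. Using \eqref{eq:sndderiv} and the cyclic invariance of the trace, one computes
\[
 v^\top \nabla^2 f(p)\, v \;=\; \sum_{i,j=1}^n v_i v_j \,\tr\!\big(W^{-1} W_i W^{-1} W_j\big) \;=\; \tr\!\big(W^{-1} M W^{-1} M\big) \;=\; \tr\big(N^2\big) \;=\; \|N\|_{\rm F}^2 \;\ge\; 0,
\]
where $N := W^{-1/2} M W^{-1/2}$ is symmetric and $W^{-1/2}$ denotes the inverse of the positive definite square root of $W$. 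Hence $\nabla^2 f(p) \succeq O$, and $v^\top \nabla^2 f(p) v = 0$ forces $N = O$, so $M = W^{1/2} N W^{1/2} = O$, i.e. $\sum_{i=1}^n v_i W_i = O$.

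The key remaining step --- and the only place where stability of $A$ is used --- is to deduce $v = 0$ from $\sum_i v_i W_i = O$. Each $W_i$ in \eqref{eq:horizon_gramian} is the controllability Gramian with input matrix $e_i$, so by \eqref{eq:lyapunov} it solves $A W_i + W_i A^\top = -e_i e_i^\top$. Taking the $v$-weighted sum gives $A\big(\sum_i v_i W_i\big) + \big(\sum_i v_i W_i\big) A^\top = -\sum_i v_i e_i e_i^\top = -\diag(v)$; if $\sum_i v_i W_i = O$, the left side vanishes, so $\diag(v) = O$ and $v = 0$. Therefore $v^\top \nabla^2 f(p) v > 0$ for every $v \ne 0$, that is, $\nabla^2 f(p) \succ O$ on all of $X$. (Equivalently, this shows $W_1,\dots,W_n$ are linearly independent.)

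To finish, I would take distinct $p, q \in \mathcal{F}$ and set $\phi(\lambda) := f\big(\lambda p + (1-\lambda) q\big)$ for $\lambda \in [0,1]$. Since $\mathcal{F} \subseteq X$ and $X$ is convex, the whole segment lies in $X$, where $f$ is twice differentiable, and $\phi''(\lambda) = (p-q)^\top \nabla^2 f\big(\lambda p + (1-\lambda) q\big)(p-q) > 0$ because $p - q \ne 0$. Thus $\phi$ is strictly convex on $[0,1]$, which is precisely the assertion that $f$ is strictly convex on $\mathcal{F}$. I expect the only non-routine point to be the implication $\sum_i v_i W_i = O \Rightarrow v = 0$; everything else is a short trace manipulation together with the elementary fact that a positive definite Hessian on a convex domain yields strict convexity there.
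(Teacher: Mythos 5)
Your proposal is correct and follows essentially the same route as the paper: the identity $v^\top \nabla^2 f(p)\, v = \|W_{\rm con}(p)^{-1/2} W_{\rm con}(v) W_{\rm con}(p)^{-1/2}\|_{\rm F}^2$ is exactly the paper's computation with $G(p,v)$, and your deduction of $v=0$ from $\sum_i v_i W_i = O$ via the summed Lyapunov equation is precisely the paper's Lemma \ref{Lem_Wi_independent}. The only cosmetic difference is that you make the final step explicit by restricting $f$ to a segment in the convex set $X$, whereas the paper concludes strict convexity directly from the positive-definite Hessian on $\mathcal{F}$.
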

\begin{proof}
See Appendix \ref{Ape_A}. \qed
\end{proof}


From the above argument, we obtain the following theorem.

\begin{theorem} \label{thm:unique_existence}
The solution to problem \eqref{prob:org} uniquely exists.
\end{theorem}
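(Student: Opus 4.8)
The plan is to combine the three ingredients already assembled in the excerpt: the reformulation of problem \eqref{prob:org} as the equivalent problem \eqref{prob:org2} over the set $\mathcal{F}_0$, the topological properties of $\mathcal{F}_0$ from Lemma \ref{Lem_F0}, and the strict convexity of $f$ from Theorem \ref{thm:strictlyconvex}. Existence follows from a Weierstrass-type argument and uniqueness from strict convexity; the only subtlety is that $f$ is only defined (and finite) on $X$, so one must be careful that the minimization genuinely takes place on a compact set on which $f$ is continuous.

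First I would recall that $\left(1/n,\dots,1/n\right)\in\mathcal{F}$, so a feasible point $p^{(0)}$ exists and problem \eqref{prob:org2} is a legitimate reformulation of \eqref{prob:org} with the same optimal value and the same set of minimizers. Next, by Lemma \ref{Lem_F0}, $\mathcal{F}_0$ is closed and bounded in $\Real^n$, hence compact. The key point for existence is that $\mathcal{F}_0\subseteq X$: indeed \eqref{eq_F0} shows $\mathcal{F}_0 = \{p : f(p)\le f(p^{(0)})\}\cap\Delta$, and membership in this set requires $f(p)$ to be finite, which forces $W_{\rm con}(p)\succ O$, i.e.\ $p\in X$. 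Therefore $f$ is continuous (indeed twice differentiable, by \eqref{eq:deriv}–\eqref{eq:sndderiv}) on all of $\mathcal{F}_0$, and a continuous function on a nonempty compact set attains its minimum. This yields existence of an optimal solution $p^*$ to \eqref{prob:org2}, and hence to \eqref{prob:org}.

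For uniqueness, suppose $p_1^*$ and $p_2^*$ were two distinct minimizers with $f(p_1^*) = f(p_2^*) = f^*$. Since $\mathcal{F}$ (and hence, restricting to the sublevel set, $\mathcal{F}_0$) is convex, the midpoint $\bar{p} = (p_1^* + p_2^*)/2$ is feasible. By Theorem \ref{thm:strictlyconvex}, $f$ is strictly convex on $\mathcal{F}$, so $f(\bar{p}) < \tfrac12 f(p_1^*) + \tfrac12 f(p_2^*) = f^*$, contradicting optimality of $f^*$. Hence the minimizer is unique.

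I do not anticipate a serious obstacle here; the work has all been front-loaded into Lemma \ref{Lem_F0} and Theorem \ref{thm:strictlyconvex}. The one place to be careful — and the step I would state most explicitly — is the observation that $\mathcal{F}_0 \subseteq X$, so that $f$ is continuous on the compact set over which we minimize; without the representation \eqref{eq_F0} from Lemma \ref{Lem_F0} this would fail, since $\mathcal{F}$ itself is neither closed nor open (as the $2\times 2$ example in the excerpt illustrates) and $f$ blows up towards $\partial X$. Everything else is the standard "continuous function on a compact convex set, strictly convex $\Rightarrow$ unique minimizer" template.
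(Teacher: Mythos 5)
Your proposal is correct and follows essentially the same route as the paper's proof: existence via compactness of $\mathcal{F}_0$ (Lemma \ref{Lem_F0}) together with continuity of $f$ there, and uniqueness via strict convexity (Theorem \ref{thm:strictlyconvex}) on the convex set $\mathcal{F}_0$. The only difference is cosmetic — you spell out the inclusion $\mathcal{F}_0\subseteq X$ and the midpoint argument explicitly, whereas the paper cites standard propositions from Bertsekas for both steps.
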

\begin{proof}
See Appendix \ref{Ape_B}. \qed
\end{proof}

Theorem \ref{thm:unique_existence} guarantees that there exists the unique optimal solution $p^\ast$ to problem \eqref{prob:org}. 
We call $p_i^\ast$ Volumetric Controllability Score (VCS) of state node $x_i$ and $p^\ast$ VCS of $x$.
Moreover, we call problem \eqref{prob:org} the controllability scoring problem.

A larger VCS of state node $x_i$
indicates greater importance of the node from a controllability perspective.
This can be understood from the fact that for any $p\in X$,
\begin{align*}
\left(\nabla f(p)\right)_{i} = -{\rm tr} (W_{\rm con}(p)^{-1/2} W_i W_{\rm con}(p)^{-1/2}) <0,
\end{align*}
because $W_i\neq 0$.
That is, $\left(\nabla f(p)\right)_{i}<0$ on $X$ guarantees  that an increase in $p_i$ leads to a decrease in the value of $f$, thereby increasing the volume of the controllability ellipsoid $\mathcal{E}$.

To compute the VCS, we propose an algorithm for solving problem \eqref{prob:org}
in Section \ref{Sec5}.

\begin{remark}
Problem \eqref{prob:org} explicitly involves the constraint on $p$.
Moreover, $\det W_{\rm con}(p)$ in the problem can be related to the controllability ellipsoid $\mathcal{E}$, as explained in Section \ref{sec:prelim}.
Note that the definition of $\mathcal{E}$ includes the input energy constraint $\int_0^\infty \|u(t)\|^2 \D t\leq 1$.
That is, when we consider problem \eqref{prob:org},
the input energy constraint is implicitly assumed.
\end{remark}

\begin{remark} \label{Rem_VCE}
    The Volumetric Control Energy (VCE) centrality proposed in \cite{summers2015submodularity} is defined as
\begin{align}
    C_{\mathrm{VCE}}(i) := \log \prod_{j=1}^{k_i} \lambda_j\left(W_i\right)=\sum_{j=1}^{k_i}\log \lambda_j(W_i), \label{eq:vce}
\end{align}
where $i\in \{1,\ldots,n\}$, $k_i:=\rank W_i$, and $\lambda_1(W_i),\ldots, \lambda_{k_i}(W_i)$ denote the eigenvalues satisfying $\lambda_1(W_i)\geq \cdots \geq \lambda_{k_i}(W_i)>0$ of $W_i$.
Here, {
$k_i$ means the controllable subspace dimension of system \eqref{eq:lti} with $B=e_i$.}
{Note that if $k_i=n$, $C_{\mathrm{VCE}}(i) = \log \det W_i = -f(e_i)$.}
That is, VCS and VCE are completely different.

Due to the definition \eqref{eq:vce}, even if $k_i>k_j$, $C_{\rm VCE}(i)<C_{\rm VCE}(j)$ may hold.
This means that even if $k_i$ is large, $\lambda_{k_i}(W_i)$ may be almost zero.
Thus, VCE does not always capture the importance of each node from the viewpoint of controllability.
More concretely, see Section \ref{Sec6-B}.
\end{remark}

\subsection{Average Energy Controllability Score (AECS)}

Instead of controllability scoring problem \eqref{prob:org}, we can consider the following problem.
\begin{framed}
\vspace{-1em}
\begin{align}
    \label{prob:trinv}
    \begin{aligned}
        &&& \text{minimize} && g(p) := \tr \paren{W_{\rm con}(p)^{-1}} \\
        &&& \text{subject to} && p \in \mathcal{F}.
    \end{aligned}
\end{align}
\vspace{-1em}
\end{framed}

The objective function $g(p)$ in \eqref{prob:trinv} can be interpreted as the average energy required for the system to be driven, as explained in \cite{olshevsky2017non}.
In fact, $g(p)$ is proportional to the average of $x_f^\top W_{\rm con}(p)^{-1} x_f$ with respect to $x_f$ over a uniform distribution of the unit sphere.
Here, the function $x_f^\top W_{\rm con}(p)^{-1} x_f$ means the minimum squared input energy $\int_0^\infty \|u(t)\|^2 \D t$ subject to $x(0)=0$ and $\lim_{t\rightarrow \infty} x(t) =x_f$, as shown in \cite[Section 4.3]{dullerud2013course}.

The function $g(p)$ is also twice-differentiable on the open set $X$, and for $p\in X$,
the gradient $\nabla g(p)$ and Hessian $\nabla^2 g(p)$ are given by
\begin{align}
  &  \left( \nabla g(p)\right)_i = -{\rm tr}\left(W_{\rm con}(p)^{-1}W_iW_{\rm con}(p)^{-1} \right), \label{gradient_g}\\
   & \left( \nabla^2 g(p) \right)_{ij} = {\rm tr} \left(W_{\rm con}(p)^{-1}W_jW_{\rm con}(p)^{-1}W_i W_{\rm con}(p)^{-1} \right) \nonumber \\
     &\quad \quad +{\rm tr} \left(W_{\rm con}(p)^{-1}W_i W_{\rm con}(p)^{-1}W_j W_{\rm con}(p)^{-1} \right), \label{hessian_g}
\end{align}
respectively.
Using \eqref{hessian_g}, we have the following theorem.

\begin{theorem} \label{thm:strictlyconvex2}
    The objective function $g(p)$ of problem \eqref{prob:trinv} is strictly convex on $\mathcal{F}$.
\end{theorem}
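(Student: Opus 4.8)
The plan is to show that the Hessian $\nabla^2 g(p)$ given in \eqref{hessian_g} is positive definite at every $p\in X$, since $\mathcal{F}\subset X$ and positive definiteness of the Hessian on a convex set implies strict convexity there. Fix $p\in X$, write $W:=W_{\rm con}(p)\succ O$ for brevity, and let $v=(v_1,\dots,v_n)\in\Real^n$ be arbitrary. The quadratic form is
\begin{align*}
v^\top \nabla^2 g(p)\, v &= \sum_{i,j} v_i v_j \Bigl[\tr\paren{W^{-1}W_jW^{-1}W_iW^{-1}} + \tr\paren{W^{-1}W_iW^{-1}W_jW^{-1}}\Bigr] \\
&= 2\,\tr\paren{W^{-1} M W^{-1} M W^{-1}},
\end{align*}
where $M:=\sum_i v_i W_i$ is symmetric (by the symmetry in $i,j$ the two terms coincide after summation). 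So the whole claim reduces to showing $\tr(W^{-1}MW^{-1}MW^{-1})\ge 0$ with equality iff $M=O$, and then arguing $M=O\Rightarrow v=0$.

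For the nonnegativity, I would symmetrize: set $S:=W^{-1/2}M W^{-1/2}$, which is symmetric, and $R:=W^{-1/2}\succ O$. Then a cyclic rearrangement gives $\tr(W^{-1}MW^{-1}MW^{-1}) = \tr(R S R S R) = \tr\paren{(R^{1/2}SR^{1/2})(R^{1/2}SR^{1/2})\,?}$ — more cleanly, $\tr(RSRSR)=\tr\paren{(SR^{1/2})(R^{1/2}SR^{1/2})(R^{1/2}S)}$ after writing $R=R^{1/2}R^{1/2}$ and cycling, but the slickest route is: $\tr(R S R S R) = \tr\paren{(R S R^{1/2})^\top (R S R^{1/2})}\cdot$? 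Let me instead use $\tr(W^{-1}MW^{-1}MW^{-1}) = \tr\paren{(W^{-1}MW^{-1/2})(W^{-1}MW^{-1/2})^\top} = \|W^{-1}MW^{-1/2}\|_{\rm F}^2 \ge 0$, using $W^{-1}$ symmetric. This also shows equality holds iff $W^{-1}MW^{-1/2}=O$, i.e. (since $W^{-1}$ and $W^{-1/2}$ are invertible) iff $M=O$.

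It remains to show $M=\sum_i v_i W_i = O$ forces $v=0$, which is exactly the linear independence of $W_1,\dots,W_n$ in the space of symmetric matrices. This is the one substantive point and I expect it to be the main obstacle. I would argue: each $W_i\succeq O$ with $W_i\ne O$ (indeed $e_i^\top W_i e_i = \int_0^\infty \|e_i^\top\exp(A^\top t)e_i\|^2\,\D t>0$ since the integrand equals $1$ at $t=0$ and is continuous). Since this is precisely the computation already invoked for Theorem \ref{thm:strictlyconvex} (the strict convexity of $f$ rests on the same Hessian-positivity argument via \eqref{eq:sndderiv}, which by the same symmetrization equals $\|W^{-1/2}MW^{-1/2}\|_{\rm F}^2$ and hence needs the identical linear-independence fact), I would simply invoke the argument from Appendix \ref{Ape_A}: the linear independence of $\{W_i\}$ is established there and can be cited verbatim. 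Concretely, one shows $W_i = \int_0^\infty \e^{At}e_ie_i^\top \e^{A^\top t}\,\D t$ are linearly independent because $\sum_i v_i W_i = O$ implies $\int_0^\infty \|\,\sum_i \sqrt{?}\,\|$ — more directly, $0 = \tr\paren{\sum_i v_i W_i} = \sum_i v_i \int_0^\infty \|\e^{A^\top t}e_i\|^2\D t$ is not immediately enough since $v_i$ can be negative, so the cleaner statement is: for $z\in\Complex^n$ in an eigenspace-type test, evaluate the bilinear form; but the self-contained fact is that $\{e_ie_i^\top\}_{i=1}^n$ are linearly independent in $\mathrm{Sym}(n)$ and the map $N\mapsto \int_0^\infty \e^{At}N\e^{A^\top t}\D t$ is injective on $\mathrm{Sym}(n)$ (its inverse is $N\mapsto -(A N + N A^\top)$ via the Lyapunov equation, valid since $A$ is stable). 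Hence $\sum_i v_i W_i = O \iff \sum_i v_i e_ie_i^\top = \diag(v) = O \iff v=0$. Combining, $v^\top\nabla^2 g(p)v = 2\|W_{\rm con}(p)^{-1}M W_{\rm con}(p)^{-1/2}\|_{\rm F}^2 > 0$ for all $v\ne 0$, so $\nabla^2 g(p)\succ O$ on $X\supset\mathcal{F}$, proving $g$ is strictly convex on $\mathcal{F}$. $\hfill\Box$
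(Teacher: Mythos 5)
Your proof is correct and follows essentially the same route as the paper's: both express the Hessian quadratic form as $2\tr\paren{W^{-1}W_{\rm con}(v)W^{-1}W_{\rm con}(v)W^{-1}}$, recognize it as a squared Frobenius norm (hence nonnegative, vanishing only when $W_{\rm con}(v)=O$), and then conclude $v=0$ from the linear independence of $W_1,\dots,W_n$, which is established exactly as you do via the Lyapunov equation $AW_{\rm con}(v)+W_{\rm con}(v)A^\top=-\diag(v)$. The only differences are cosmetic (the paper routes the computation through $G(p,x)=W^{-1/2}W_{\rm con}(x)W^{-1/2}$ and a trace-of-PSD-times-PD argument rather than your direct $\|W^{-1}MW^{-1/2}\|_{\rm F}^2$ identity).
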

\begin{proof}
See Appendix \ref{Ape_A}. \qed
\end{proof}

Similarly to Theorem \ref{thm:unique_existence}, by using Theorem \ref{thm:strictlyconvex2}, we can prove that there exists the unique optimal solution $p^\ast$ to problem \eqref{prob:trinv}. 
Thus, each element of the solution $p^\ast$ can also be interpreted as the relative importance assigned to each state node from the viewpoint of controllability.
We call
$p_i^*$ Average Energy Controllability Score (AECS) of state node $x_i$ and
$p^\ast$ AECS of $x$.
Note that, similarly to VCS, a larger AECS for state node $x_i$
indicates greater importance of the node from a controllability perspective, because $\left( \nabla g(p)\right)_i<0$ for any $p\in X$.
Additionally, we refer to problem \eqref{prob:trinv} as the controllability scoring problem.

The difference between VCS and ACES can be summarized as follows:
\begin{itemize}
    \item VCS of each state node indicates its importance in enlarging the controllability ellipsoid $\mathcal{E}$.

    \item AECS of each state node indicates its importance in steering the overall state to a point on the unit sphere.
\end{itemize}

\begin{remark} \label{Rem_ACE}
    The Average Control Energy (ACE) centrality proposed in \cite{summers2015submodularity} is defined as    
\begin{align*}
C_{\mathrm{ACE}}(i) := -\tr\left(W_i^\dagger\right),
\end{align*}
where $i\in \{1,\ldots,n\}$ and $W_i^\dagger$ is the pseudo inverse of $W_i$.
Similarly to the case of VCS and VCE, AECS and ACE are completely different,
and
 ACE does not always capture the importance of each node from the viewpoint of controllability.
More concretely, see Section \ref{Sec6-B}.
\end{remark}

\section{Finite-time controllability scoring problem} \label{Sec_unstable}

In this section,
 we only assume (i) and (ii) presented in Section \ref{sec:formulation}.
That is, the matrix $A$ of system \eqref{eq:lti} may be unstable.
Even in this case, we can consider controllability scoring problems using the finite-time CG
\begin{align}
    W_{\rm con}(p, T)  
     = \sum_{i=1}^n p_i W_i(T) \label{Def_Wc}
\end{align}
with
\begin{align}
    \label{eq:finite_time_horizon_gramian}
    W_i(T) := \int_0^T \exp(At) e_ie_i^\top \exp(A^\top t) \ \D t,
\end{align}
and convex objective functions
\begin{align}
f_T(p) &:= -\log\det W_{\rm con}(p, T), \label{def_fT}\\
g_T(p) &:= \tr \paren{W_{\rm con}(p, T)^{-1}}. \label{def_gT}
 \end{align}
That is, by replacing $X$ in \eqref{prob:org} and \eqref{prob:trinv} with 
\begin{align*}
X_T:= \{p\in {\bb R}^n \mid W_{\rm con}(p,T)\succ O\},
\end{align*}
we can consider the following convex optimization problem called the finite-time controllability scoring problem.
\begin{framed}
\vspace{-1em}
 \begin{align}
    \label{prob:unstable}
    \begin{aligned}
        &&& \text{minimize} && f_T(p)\,\,\, {\rm or}\,\,\,  g_T(p) \\
        &&& \text{subject to} && p \in X_T\cap \Delta.
    \end{aligned}
\end{align}
\vspace{-1em}
\end{framed}

Unlike the stable case in Section \ref{sec:formulation}, the uniqueness of the optimal solution to \eqref{prob:unstable} is not guaranteed in general. In fact, consider the unstable matrix
$A = \begin{pmatrix}
        0 & \omega \\
        -\omega & 0
\end{pmatrix}$, $\omega\neq 0$.
Then, we have
    $W_1(T)  = \begin{pmatrix}
        \frac{1}{2}\left( T +\frac{\sin2\omega T}{2\omega} \right) & \frac{1}{4}(1-\cos 2\omega T)  \\
        \frac{1}{4}(1-\cos 2\omega T)& \frac{1}{2}\left(         T -\frac{\sin2\omega T}{2\omega}\right)
    \end{pmatrix}$ and 
    $W_2(T)  = \begin{pmatrix}
       \frac{1}{2}\left( T -\frac{\sin2\omega T}{2\omega} \right) & -\frac{1}{4}(1-\cos 2\omega T)  \\
      -\frac{1}{4}(1-\cos 2\omega T) & \frac{1}{2}\left( T +\frac{\sin2\omega T}{2\omega} \right)
    \end{pmatrix}$.
Thus, if $T=T^\ast := \pi /\omega$,
    $W_1(T^\ast) = W_2(T^\ast) = \frac{1}{2\omega}\begin{pmatrix}
    \pi & 0 \\
    0 & \pi
    \end{pmatrix}
    \succ O$.
Hence, for all $p \in \mathcal{F}$, we have
    $f_{T^\ast}(p) = -\log\det (p_1+p_2)W_1(T^\ast)  = -\log\det W_1(T^\ast)$.
That is, if $T = T^\ast$, $f_{T^\ast}(p)$ is constant for all $p\in X_T\cap \Delta$ and the optimal solution to controllability scoring problem \eqref{prob:unstable} is not unique.

Fortunately, finite-time controllability scoring problem \eqref{prob:unstable} for some important unstable cases has an unique solution.

\begin{theorem}
    \label{thm:unstable_realeigen}
     Assume that $A$ and $-A$ of system \eqref{eq:lti} do not have a common eigenvalue.
    Then, there exists the unique solution to
    finite-time controllability scoring problem \eqref{prob:unstable}.
\end{theorem}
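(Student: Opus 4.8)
The plan is to follow the same two-step structure used for the stable case: first establish that the relevant objective ($f_T$ or $g_T$) is \emph{strictly convex} on the feasible set under the stated eigenvalue hypothesis, and second establish that a minimizer \emph{exists} by reducing to a compact sublevel set. Strict convexity plus convexity of the feasible region $X_T\cap\Delta$ immediately gives uniqueness, so the substance is in these two points.

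For strict convexity, recall from the proof of Theorem~\ref{thm:strictlyconvex} (Appendix~\ref{Ape_A}) that the Hessian of $f_T$, analogous to \eqref{eq:sndderiv}, reads $(\nabla^2 f_T(p))_{ij}=\tr\!\left(W_{\rm con}(p,T)^{-1}W_i(T)W_{\rm con}(p,T)^{-1}W_j(T)\right)$, and the argument there shows this quadratic form is positive unless $\sum_i v_i W_i(T)=O$ for the direction $v$; the same reduction applies to $g_T$ via \eqref{hessian_g}. So the crux is a \emph{linear independence} statement: the matrices $W_1(T),\dots,W_n(T)$ are linearly independent whenever $A$ and $-A$ share no eigenvalue. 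I would prove this by contradiction. Suppose $\sum_{i=1}^n v_i W_i(T)=O$ with $v\neq 0$. Writing $D:=\diag(v)$, this says $\int_0^T e^{At}De^{A^\top t}\,dt=O$. The key observation is that this integrand matrix satisfies a differentiable identity: letting $M(T):=\int_0^T e^{At}De^{A^\top t}\,dt$, we have $\dot M(T)=e^{AT}De^{A^\top T}$, and more usefully $A M(T)+M(T)A^\top = e^{AT}De^{A^\top T}-D$. If $M(T)=O$ for the particular horizon $T$, then $D = -e^{AT}De^{A^\top T}$. Now I would diagonalize (or upper-triangularize) $A$ and compare the entries of $D$ against eigenvalue products $e^{(\lambda_j+\bar\lambda_k)T}$; the relation $D=-e^{AT}De^{A^\top T}$ forces, entrywise in an eigenbasis, that the nonzero structure of $D$ lives only where $e^{(\lambda_j+\lambda_k)T}=-1$, in particular where $\lambda_j+\lambda_k\neq 0$. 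But $D$ is diagonal and real; examining the diagonal positions $j=k$ gives $d_j = -e^{2\lambda_j T} d_j$, hence $d_j=0$ unless $e^{2\lambda_j T}=-1$. Combined with the off-diagonal constraints and the hypothesis that $A,-A$ have no common eigenvalue (so $\lambda_j\neq-\lambda_k$ for all $j,k$, which rules out the cancellations that allowed the counterexample with $A=\bigl(\begin{smallmatrix}0&\omega\\-\omega&0\end{smallmatrix}\bigr)$), I expect to conclude $D=O$, i.e.\ $v=0$ — contradiction. An alternative, possibly cleaner route: differentiate $M(T)\equiv O$ in $T$ repeatedly to get $\frac{d^k}{dT^k}\bigl(e^{At}De^{A^\top t}\bigr)\big|_{\text{appropriate}}=O$ and extract that $A^j D (A^\top)^k$-type combinations vanish, then use the no-common-eigenvalue condition to invert the associated Sylvester operator $L(Z)=AZ+ZA^\top$ (which is nonsingular precisely when $A$ and $-A$ share no eigenvalue) and conclude $D=O$.

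For existence, I would mimic Lemma~\ref{Lem_F0} and Theorem~\ref{thm:unique_existence}: pick the feasible point $p^{(0)}=(1/n,\dots,1/n)$ (which lies in $X_T\cap\Delta$ since $\sum_i \tfrac1n W_i(T)=\tfrac1n\int_0^T e^{At}e^{A^\top t}\,dt\succ O$ when $(A,I)$ is controllable — and it always is, as every $e_i$ direction contributes), form the sublevel set $\mathcal{F}_0^T:=\{p: f_T(p)\le f_T(p^{(0)})\}\cap(X_T\cap\Delta)$, and argue it equals $\{p: f_T(p)\le f_T(p^{(0)})\}\cap\Delta$ and is closed, bounded, convex. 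Boundedness is inherited from $\Delta$; the closedness point is where one must be careful, exactly as in the stable case — a sequence $p^{(k)}\in\mathcal{F}_0^T$ converging to $\bar p\in\Delta$ cannot have $W_{\rm con}(\bar p,T)$ singular, because then $\det W_{\rm con}(p^{(k)},T)\to 0$ and $f_T(p^{(k)})\to+\infty$, violating the sublevel bound; hence $\bar p\in X_T$ and $\mathcal{F}_0^T$ is closed. Continuity of $f_T$ (resp.\ $g_T$) on the compact set $\mathcal{F}_0^T$ then yields a minimizer, and strict convexity makes it unique. The same argument verbatim handles $g_T$.

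The main obstacle is the linear-independence lemma for $\{W_i(T)\}$, i.e.\ translating ``$A$ and $-A$ have no common eigenvalue'' into nonvanishing of the Gramian Hessian for \emph{every fixed} $T>0$. The stable infinite-horizon case was easy because $W_i=\int_0^\infty$ solves a Lyapunov equation with invertible Sylvester operator; here the finite-horizon integral does not, so one must exploit the boundary term $e^{AT}De^{A^\top T}-D$ and show the resulting functional equation $D=-e^{AT}De^{A^\top T}$ has only the trivial real diagonal solution under the eigenvalue hypothesis. I would expect to spend most of the effort making this spectral bookkeeping rigorous (handling repeated eigenvalues and complex-conjugate pairs of a real matrix cleanly), with the existence half being a routine transcription of Lemma~\ref{Lem_F0}.
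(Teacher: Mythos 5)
Your overall architecture matches the paper's: reduce everything to the linear independence of $W_1(T),\dots,W_n(T)$ over $\Real$, deduce strict convexity of $f_T$ and $g_T$ exactly as in Theorems \ref{thm:strictlyconvex} and \ref{thm:strictlyconvex2}, and obtain existence from a closed, bounded, convex sublevel set as in Lemma \ref{Lem_F0} and Theorem \ref{thm:unique_existence}. The existence half of your argument is a correct transcription of the stable case. The gap is that the linear-independence lemma --- which you rightly identify as the crux --- is not actually established by what you wrote.

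Three concrete problems. First, a sign error: from $M(T)=O$ and $AM(T)+M(T)A^\top=\E^{AT}D\E^{A^\top T}-D$ you obtain $\E^{AT}D\E^{A^\top T}=D$, not $D=-\E^{AT}D\E^{A^\top T}$, so your subsequent conditions ``$\E^{(\lambda_j+\lambda_k)T}=-1$'' and ``$d_j=-\E^{2\lambda_jT}d_j$'' analyze the wrong equation. Second, even with the correct sign the spectral bookkeeping does not close as stated: in an eigenbasis $A=S\Lambda S^{-1}$ the identity becomes $\tilde d_{jk}\bigl(\E^{(\lambda_j+\lambda_k)T}-1\bigr)=0$ for $\tilde D:=S^{-1}DS^{-\top}$ (whose entries are \emph{not} the entries of the diagonal matrix $D$), and the hypothesis $\lambda_j+\lambda_k\neq 0$ does not by itself exclude $\E^{(\lambda_j+\lambda_k)T}=1$, since one can have $\lambda_j+\lambda_k=2\pi \mathrm{i}m/T$ with $m\neq 0$ (e.g.\ $\lambda_j=1+\mathrm{i}\pi/T$, $\lambda_k=-1+\mathrm{i}\pi/T$). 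Only the diagonal case $j=k$ is genuinely ruled out, because a purely imaginary eigenvalue of a real $A$ forces its conjugate, hence its negative, to also be an eigenvalue. So ``I expect to conclude $D=O$'' is exactly where the remaining work sits. Third, your alternative route of differentiating ``$M(T)\equiv O$'' repeatedly is invalid: $M$ is only known to vanish at the single horizon $T$, not on an interval (in the paper's rotation example, $W_1(s)-W_2(s)$ vanishes at $s=T^\ast$ but not for $s<T^\ast$). For comparison, the paper closes the lemma by invoking the Sylvester-uniqueness result (Lemma \ref{Lem_Sylvester}, Corollary \ref{Cor_Sylvester}) a second time: it rewrites $\E^{AT}\diag(x)\E^{A^\top T}=\diag(x)$ as $\E^{AT}\diag(x)-\diag(x)\E^{-A^\top T}=O$ and asserts that $\E^{AT}$ and $\E^{-A^\top T}$ share no eigenvalue, which forces $\diag(x)=O$. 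Note that this assertion is precisely the non-resonance condition your entrywise analysis surfaces, so if you pursue your route you must justify it (or impose it) explicitly rather than expect it to follow from $\lambda_j\neq-\lambda_k$ alone.
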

\begin{proof}
See Appendix \ref{Ape_C}.\qed
\end{proof}

The following theorem means that we can use the unique solution to finite-time controllability scoring problem \eqref{prob:unstable}
to select leaders in a multi-agent system.
The selection has been studied in \cite{abbas2020tradeoff, fitch2016optimal, she2021energy}.
\textcolor{black}{Note that a graph Laplacian matrix $L$ has $0$ eigenvalue. That is, because $L$ and $-L$ have the common eigenvalue, we cannot use Theorem \ref{thm:unstable_realeigen}.}

\begin{theorem}[Laplacian dynamics] \label{Thm_Laplacian}
    Let $L$ be a graph Laplacian matrix corresponding to an undirected connected graph. Then, for $A=-L$,
    there exists a unique solution to
 finite-time controllability scoring problem \eqref{prob:unstable}.
\end{theorem}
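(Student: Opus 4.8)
The plan is to follow the same two-step template as the proof of Theorem~\ref{thm:unstable_realeigen}: first show that a minimizer of $f_T$ (equivalently $g_T$) over $X_T\cap\Delta$ exists, and then show that the objective is strictly convex there, so the minimizer is unique. The point of the hypothesis $A=-L$ is to recover strict convexity, which, as the rotation example preceding the theorem shows, can genuinely fail for an unstable $A$.

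\emph{Feasibility and existence.} Since $A=-L=A^\top$ with $L\succeq O$, diagonalize $L=Q\Lambda Q^\top$ with $Q$ orthogonal, $\Lambda=\diag(\lambda_1,\dots,\lambda_n)$, $\lambda_i\ge 0$. Then $W_{\rm con}(\mathbf 1/n,T)=\frac1n\int_0^T e^{-2Lt}\,\D t=\frac1n\,Q\Phi Q^\top$ with $\Phi=\diag(\phi_1,\dots,\phi_n)$ and $\phi_i:=\int_0^T e^{-2\lambda_i t}\,\D t>0$, so $W_{\rm con}(\mathbf 1/n,T)\succ O$ and $\mathbf 1/n\in X_T\cap\Delta$; the feasible set is nonempty. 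Existence then follows exactly as in Lemma~\ref{Lem_F0} and Theorem~\ref{thm:unique_existence}: $\Delta$ is compact, and as $p$ tends to a point of $\partial X_T$ inside $\Delta$ the matrix $W_{\rm con}(p,T)$ becomes singular, so $f_T(p)=-\log\det W_{\rm con}(p,T)\to+\infty$ and $g_T(p)=\tr(W_{\rm con}(p,T)^{-1})\to+\infty$; hence the sublevel set $\{p\in X_T\cap\Delta:\ f_T(p)\le f_T(\mathbf 1/n)\}$ is a closed subset of the compact $\Delta$, hence compact, and $f_T$ (resp.\ $g_T$) attains a minimum on it.

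\emph{Uniqueness via strict convexity.} By the computations in the proofs of Theorems~\ref{thm:strictlyconvex} and~\ref{thm:strictlyconvex2}, for $p\in X_T$ and $v\in\Real^n$, writing $S_v:=\sum_{i=1}^n v_iW_i(T)$ (symmetric) and $W:=W_{\rm con}(p,T)$, we have $v^\top\nabla^2 f_T(p)v=\|W^{-1/2}S_vW^{-1/2}\|_{\rm F}^2$, and analogously $v^\top\nabla^2 g_T(p)v=2\|W^{-1}S_vW^{-1/2}\|_{\rm F}^2$; each is nonnegative and vanishes if and only if $S_v=O$. So it suffices to prove $S_v=O\Rightarrow v=0$. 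Using $e^{At}=e^{-Lt}$, $\sum_i v_ie_ie_i^\top=\diag(v)$, and the columns $q_1,\dots,q_n$ of $Q$,
\[
S_v=\int_0^T e^{-Lt}\diag(v)\,e^{-Lt}\,\D t=\sum_{a,b=1}^n\Big(\int_0^T e^{-(\lambda_a+\lambda_b)t}\,\D t\Big)\big(q_a^\top\diag(v)\,q_b\big)\,q_aq_b^\top .
\]
Pairing with $q_c,q_d$ gives $q_c^\top S_vq_d=\big(\int_0^T e^{-(\lambda_c+\lambda_d)t}\,\D t\big)\,q_c^\top\diag(v)q_d$, and since $\lambda_c+\lambda_d\ge 0$ the scalar $\int_0^T e^{-(\lambda_c+\lambda_d)t}\,\D t$ is strictly positive for every pair $(c,d)$. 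Hence $S_v=O$ forces $Q^\top\diag(v)Q=O$, i.e.\ $v=0$. Therefore $f_T$ and $g_T$ are strictly convex on the convex set $X_T\cap\Delta$, so each has at most one minimizer; combined with existence, the minimizer is unique.

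\emph{Main obstacle.} The crux is the implication $S_v=O\Rightarrow v=0$, equivalently the strict positivity of all the weights $\int_0^T e^{-(\lambda_a+\lambda_b)t}\,\D t$. This uses both that $A=-L$ is symmetric (so $e^{At}$ is symmetric with an orthogonal eigenbasis $Q$) and that $L$ has nonnegative spectrum; these weights are exactly what can vanish when $e^{At}$ is a rotation, which is the mechanism behind the non-uniqueness example stated before the theorem. I expect connectedness of the graph to be invoked mainly to place the result in the standard leader-selection/consensus setting (it fixes the zero eigenvalue of $L$ as simple); note that the positivity argument above only requires $\lambda_a+\lambda_b\ge 0$, so it does not otherwise rely on simplicity of that eigenvalue.
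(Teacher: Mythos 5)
Your proposal is correct, and the existence/strict-convexity scaffolding is the same as the paper's (reduce everything to showing $W_{\rm con}(v,T)=O\Rightarrow v=0$, i.e.\ linear independence of $W_1(T),\dots,W_n(T)$, then argue as in Theorems \ref{thm:strictlyconvex}, \ref{thm:strictlyconvex2}, and \ref{thm:unique_existence}); but your proof of that key implication is genuinely different from the paper's. The paper passes through the finite-time Lyapunov identity to reduce $W_{\rm con}(x,T)=O$ to the matrix equation $\E^{AT}\diag(x)-\diag(x)\E^{-A^\top T}=O$, and then kills $x$ by a Perron--Frobenius-type argument: connectivity of the graph makes $\E^{AT}$ an entrywise positive matrix, the off-diagonal entries give $x_j=(q_{ij}/p_{ij})x_i$, and a spectral comparison of the diagonal entries $p_{ii}$ and $q_{ii}$ (using that $\lambda_k>0$ for $k\ge 2$, again connectivity) produces one index with $x_i=0$, hence $x=0$. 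You instead diagonalize the Gramian integral directly in the orthogonal eigenbasis of $L$ and observe that every weight $\int_0^T \E^{-(\lambda_a+\lambda_b)t}\,\D t$ is strictly positive, so $S_v=O$ forces $Q^\top\diag(v)Q=O$ and $v=0$. Your route is more elementary and strictly more general: it uses only the symmetry of $A$ (the weights are positive for any real $\lambda_a+\lambda_b$, so neither nonnegativity of the spectrum nor connectedness is actually needed), whereas the paper's argument invokes connectivity twice; what the paper's route buys is continuity with Theorem \ref{thm:unstable_realeigen} (the same Lyapunov-equation reduction and the same target equation \eqref{eq:finite_lyapunov_right2}), which makes transparent exactly which mechanism fails in the rotation counterexample.
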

\begin{proof}
See Appendix \ref{Ape_C}. \qed
\end{proof}


\section{Algorithm for Controllability Scores} \label{Sec5}
Algorithm \ref{alg:projgrad} is a projected gradient method for calculating the VCS, which is the unique solution to controllability scoring problem \eqref{prob:org}.
\textcolor{black}{Step 4 in Algorithm~\ref{alg:projgrad} is a terminal condition, which is called an $\varepsilon$-stationally point condition.
The step size $\alpha^{(k)}$ is defined by using Algorithm \ref{alg:Armijo}, which is called Armijo rule along the projection arc \cite{bertsekas2016nonlinear}.
Algorithm \ref{alg:Armijo} terminates after a finite number of iterations.
}

As noted in Section~\ref{Sec3A}, the feasible region $\mathcal{F}$ of problem \eqref{prob:org} is not a closed set of $\Real^n$ in general, and thus we cannot define the projection onto $\mathcal{F}$. 
Hence, instead of the projection onto $\mathcal{F}$, Algorithm~\ref{alg:projgrad} uses the projection onto the standard simplex $\Delta$ in \eqref{eq:simplex}, which includes $\mathcal{F}$,
\begin{align*}
    \Pi_{\Delta}(q) := \argmin_{p\in\Delta}\|p-q\|^2.
\end{align*}
Note that the projection $\Pi_{\Delta}$ is efficiently computed\cite{condat2016fast}.

Thus, Algorithm \ref{alg:projgrad} is not a standard projected gradient method for solving \eqref{prob:org}, and the following is non-trivial.

\begin{theorem} \label{main_thm}
    \textcolor{black}{Let $\{p^{(k)}\}$ be a sequence generated by Algorithm \ref{alg:projgrad} with $\varepsilon=0$.
    Then, 
    \begin{align}
        \lim_{k\rightarrow \infty} p^{(k)} = p^*, \label{convergence_score}
    \end{align}
    where $p^*$ is the optimal solution to controllability scoring problem \eqref{prob:org}, that is, the VCS.}
\end{theorem}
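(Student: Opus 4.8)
The plan is to establish convergence in two stages: first show that the generated sequence $\{p^{(k)}\}$ stays in the compact set $\mathcal{F}_0$ of Lemma~\ref{Lem_F0} and that every accumulation point is a stationary point of problem \eqref{prob:org2}, and then use strict convexity (Theorem~\ref{thm:strictlyconvex}) together with uniqueness (Theorem~\ref{thm:unique_existence}) to upgrade this to full convergence to $p^*$. The first observation is that the Armijo-along-the-projection-arc step ensures monotone decrease $f(p^{(k+1)}) \le f(p^{(k)})$, so by induction $p^{(k)} \in \mathcal{F}_0$ for all $k$; since $\mathcal{F}_0 \subseteq \Delta$ and $\mathcal{F}_0$ is compact, $f$ is continuous and in fact smooth on a neighborhood of $\mathcal{F}_0$ inside the open set $X$ (this is where Lemma~\ref{Lem_F0}'s conclusion $\mathcal{F}_0 \subseteq \{f \le f(p^{(0)})\} \cap \Delta$ matters — it keeps us away from the boundary where $W_{\rm con}(p)$ becomes singular), and $\nabla f$ is Lipschitz on $\mathcal{F}_0$. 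These are exactly the hypotheses under which the classical analysis of the gradient projection method with Armijo rule along the projection arc applies; I would invoke the standard result (e.g.\ Bertsekas, \emph{Nonlinear Programming}, Prop.~2.3.1--2.3.2 and the surrounding discussion on the projection arc) to conclude that every limit point $\bar p$ of $\{p^{(k)}\}$ satisfies the stationarity condition $\bar p = \Pi_\Delta(\bar p - s\nabla f(\bar p))$ for all $s>0$, equivalently $\langle \nabla f(\bar p), p - \bar p\rangle \ge 0$ for all $p \in \Delta$.

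The delicate point — and the step I expect to be the main obstacle — is that Algorithm~\ref{alg:projgrad} projects onto $\Delta$, not onto the true feasible set $\mathcal{F} = X \cap \Delta$, so a priori the stationarity we obtain is only with respect to $\Delta$; one must rule out that the iterates drift toward the "bad" part of $\Delta \setminus X$ where $f$ is not even defined. I would handle this by combining two facts: (a) the sufficient-decrease inequality from the Armijo rule forces $f(p^{(k)})$ to stay bounded above by $f(p^{(0)}) < \infty$, which by the definition of $\mathcal{F}_0$ and Lemma~\ref{Lem_F0} pins the whole sequence inside the compact set $\mathcal{F}_0 \subseteq X$ on which $f$ and $\nabla f$ are well-behaved; and (b) the compactness of $\mathcal{F}_0$ then guarantees limit points exist and lie in $\mathcal{F}_0 \subseteq X \cap \Delta = \mathcal{F}$, so the variational inequality $\langle \nabla f(\bar p), p-\bar p\rangle \ge 0$ holding for all $p\in\Delta$ in particular holds for all $p \in \mathcal{F}$, i.e.\ $\bar p$ is a genuine stationary point of the convex problem \eqref{prob:org2}. (A technical care point: one should verify that the step sizes produced by Algorithm~\ref{alg:Armijo} do not degenerate to $0$ along the sequence, which again follows from the Lipschitz bound on $\nabla f$ over $\mathcal{F}_0$ giving a uniform lower bound on the accepted step; I would state this as a lemma or cite it.)

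Finally I would close the argument by convexity. Since $f$ is convex on $\mathcal{F}$ (Theorem~\ref{thm:strictlyconvex}) and $\mathcal{F}_0$ (equivalently $\mathcal{F}$ as far as minimizers are concerned) is convex, any stationary point $\bar p$ — i.e.\ any point with $\langle \nabla f(\bar p), p - \bar p\rangle \ge 0$ for all $p \in \mathcal{F}$ — is a global minimizer of $f$ over $\mathcal{F}$; by Theorem~\ref{thm:unique_existence} this global minimizer is unique and equals $p^*$. Hence the only possible accumulation point of the bounded sequence $\{p^{(k)}\}$ is $p^*$, and a bounded sequence with a unique accumulation point converges, giving $\lim_{k\to\infty} p^{(k)} = p^*$, which is \eqref{convergence_score}. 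The write-up would therefore be: (1) monotonicity and containment in $\mathcal{F}_0$; (2) well-posedness of gradients and uniform step-size lower bound on $\mathcal{F}_0$; (3) standard gradient-projection convergence to a $\Delta$-stationary point; (4) passage from $\Delta$-stationarity to $\mathcal{F}$-stationarity via $\mathcal{F}_0 \subseteq \mathcal{F}$; (5) convexity $+$ uniqueness to identify the limit as $p^*$.
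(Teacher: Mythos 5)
Your proposal is correct and follows essentially the same route as the paper: both arguments hinge on the Armijo sufficient-decrease condition keeping every iterate in the compact convex sublevel set $\mathcal{F}_0$ of Lemma~\ref{Lem_F0}, so that the projection onto $\Delta$ effectively acts as the projection onto $\mathcal{F}_0$ and the algorithm becomes a standard projected gradient method for the convex problem \eqref{prob:org2}. The only (minor) difference is the final step: the paper directly invokes Iusem's convergence theorem for the projected gradient method on convex problems to get convergence of the whole sequence, whereas you conclude via limit-point stationarity plus strict convexity and uniqueness of the minimizer — both are valid.
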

\begin{proof}
See Appendix \ref{Ape_E}. \qed
\end{proof}

As shown in the proof of Theorem \ref{main_thm}, we obtain the following corollary, which 
means that Algorithm \ref{alg:projgrad} can be regarded as a projected gradient method for solving \eqref{prob:org2}, that is, controllability scoring problem \eqref{prob:org}.

\begin{corollary}
 The projection $\Pi_{\Delta}$ onto the standard simplex $\Delta$ in Algorithms \ref{alg:projgrad} and \ref{alg:Armijo} is
 the projection $\Pi_{\mathcal{F}_0}$ onto the set $\mathcal{F}_0$ defined as \eqref{def:F0}.
\end{corollary}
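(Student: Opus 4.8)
The plan is to reduce the claim to two ingredients: that $\Pi_{\mathcal{F}_0}$ is a well-defined single-valued map, and that every argument ever fed to $\Pi_\Delta$ inside Algorithms~\ref{alg:projgrad} and~\ref{alg:Armijo} already has its $\Pi_\Delta$-image in $\mathcal{F}_0$. The first ingredient is immediate from Lemma~\ref{Lem_F0}: $\mathcal{F}_0$ is nonempty (it contains $p^{(0)}$), closed, bounded and convex, and by \eqref{eq_F0} it satisfies $\mathcal{F}_0\subseteq\Delta$; hence the Euclidean projection onto $\mathcal{F}_0$ exists and is unique. The elementary lemma I would isolate is: for any $q\in\Real^n$, if $\Pi_\Delta(q)\in\mathcal{F}_0$ then $\Pi_\Delta(q)=\Pi_{\mathcal{F}_0}(q)$. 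Indeed, $\mathcal{F}_0\subseteq\Delta$ forces $\min_{p\in\Delta}\|p-q\|^2\le\min_{p\in\mathcal{F}_0}\|p-q\|^2$; but $\Pi_\Delta(q)$ attains the left-hand minimum and, lying in $\mathcal{F}_0$, is feasible for the right-hand one, so the two minima are equal and both attained at $\Pi_\Delta(q)$, and uniqueness of the projection onto the closed convex set $\mathcal{F}_0$ gives the identity.

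It then remains to verify the second ingredient, and here I would argue by induction on the iteration index $k$, mirroring the proof of Theorem~\ref{main_thm}. The base case $p^{(0)}\in\mathcal{F}_0$ is immediate from \eqref{eq_F0}. For the inductive step, suppose $p^{(k)}\in\mathcal{F}_0\subseteq X$; then $\nabla f(p^{(k)})$ is defined by \eqref{eq:deriv}, and $\Pi_\Delta$ is only ever applied to points of the form $q=p^{(k)}-\alpha\,\nabla f(p^{(k)})$, whose projections automatically lie in $\Delta$. For the step size $\alpha^{(k)}$ accepted by Algorithm~\ref{alg:Armijo}, the sufficient-decrease inequality of the Armijo rule along the projection arc yields $f(p^{(k+1)})\le f(p^{(k)})\le\cdots\le f(p^{(0)})$, whence $p^{(k+1)}=\Pi_\Delta(p^{(k)}-\alpha^{(k)}\nabla f(p^{(k)}))\in\{p\in\Real^n\mid f(p)\le f(p^{(0)})\}\cap\Delta=\mathcal{F}_0$, closing the induction for the accepted iterates.

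The delicate point — which I expect to be the main obstacle — is the finitely many trial projections $\Pi_\Delta(p^{(k)}-\alpha\,\nabla f(p^{(k)}))$ generated for the rejected step sizes $\alpha$ during the line search of Algorithm~\ref{alg:Armijo}: these too must be shown to land in $\mathcal{F}_0$, i.e.\ $f$ must remain $\le f(p^{(0)})$ along the relevant portion of the projection arc (in particular, each such point must stay in the domain $X$ where $f$ is defined). I would handle this by invoking the standard monotonicity properties of the projection arc, notably that $\alpha\mapsto\|\Pi_\Delta(p^{(k)}-\alpha\,\nabla f(p^{(k)}))-p^{(k)}\|^2/\alpha$ is nonincreasing \cite{bertsekas2016nonlinear}, together with the initial step size prescribed in Algorithm~\ref{alg:Armijo}, exactly as already used in the proof of Theorem~\ref{main_thm} to keep the whole trajectory inside the compact convex set $\mathcal{F}_0$. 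Combining this with the elementary lemma above, every occurrence of $\Pi_\Delta$ in Algorithms~\ref{alg:projgrad} and~\ref{alg:Armijo} coincides with $\Pi_{\mathcal{F}_0}$, which is the assertion; as a by-product, Algorithm~\ref{alg:projgrad} is literally the gradient projection method with Armijo search along the projection arc applied to problem~\eqref{prob:org2}.
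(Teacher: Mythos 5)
Your first two paragraphs reproduce the paper's own argument (Appendix \ref{Ape_E}): the Armijo inequality combined with the projection theorem gives $f(p^{(k+1)})\leq f(p^{(k)})\leq f(p^{(0)})$, hence every accepted iterate $p^{(k+1)}=\Pi_{\Delta}(p^{(k)}-\alpha^{(k)}\nabla f(p^{(k)}))$ lies in $\mathcal{F}_0$ by \eqref{eq_F0}, and since $\mathcal{F}_0\subseteq\Delta$ is closed and convex (Lemma \ref{Lem_F0}), a $\Pi_{\Delta}$-projection that lands in $\mathcal{F}_0$ must coincide with the $\Pi_{\mathcal{F}_0}$-projection. Up to that point the proposal is correct and takes essentially the same route as the paper.

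The third paragraph, however, contains a genuine gap. You try to show that the trial projections $\Pi_{\Delta}(p^{(k)}-\alpha\nabla f(p^{(k)}))$ for the \emph{rejected} step sizes of Algorithm \ref{alg:Armijo} also lie in $\mathcal{F}_0$, and you justify this by the monotonicity of $\alpha\mapsto\|\Pi_{\Delta}(p^{(k)}-\alpha\nabla f(p^{(k)}))-p^{(k)}\|/\alpha$, by ``the initial step size prescribed in Algorithm \ref{alg:Armijo}'', and by appeal to the proof of Theorem \ref{main_thm}. None of these ingredients delivers the claim: the proof of Theorem \ref{main_thm} says nothing about rejected trial points; the initial $\alpha$ in Algorithm \ref{alg:Armijo} is an arbitrary positive input, not chosen small; and the monotonicity of the projection-arc quotient controls step lengths, not the value of $f$ along the arc. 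In fact the stronger statement is false in general: since every component of $\nabla f(p^{(k)})$ is negative, for a large trial $\alpha$ the point $\Pi_{\Delta}(p^{(k)}-\alpha\nabla f(p^{(k)}))$ is driven toward a low-dimensional face of $\Delta$ (in the limit a vertex $e_i$), where $f$ may exceed $f(p^{(0)})$ or $W_{\rm con}$ may even be singular, so the trial point need not belong to $\mathcal{F}_0$ (nor to $X$) at all—this is exactly the situation the backtracking is there to reject—and at such a point $\Pi_{\Delta}\neq\Pi_{\mathcal{F}_0}$. The corollary, as the paper derives it from the proof of Theorem \ref{main_thm}, asserts the identity only at the points $p^{(k)}-\alpha^{(k)}\nabla f(p^{(k)})$ with the accepted step sizes, which is all that is needed to regard Algorithm \ref{alg:projgrad} as a projected gradient method for \eqref{prob:org2}; for those points your first two paragraphs already constitute a complete proof, so the fix is to restrict the claim accordingly rather than to attempt the argument sketched in your last paragraph.
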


To use Algorithm~\ref{alg:projgrad},
we need 
CGs $W_1,\ldots, W_n$ in \eqref{eq:horizon_gramian}, which can be obtained by solving \eqref{eq:lyapunov} based on the Bartels--Stewart method \cite{bartels1972solution}.
However, this method has a time complexity of $O(n^3)$, making it impractical for
 $n>1000$.
In such cases, an approximation method for solving \eqref{eq:lyapunov} is advisable. The CF-ADI algorithm \cite{li2002low} is an efficient alternative,
particularly because each controllability Gramian $W_i$ is defined through a low-rank matrix $B=e_i$.
However, the accuracy of this approximation depends on the system matrix $A$.

\begin{figure}[!t]
\begin{algorithm}[H]
    \caption{A projected gradient method}
    \label{alg:projgrad}
  \textbf{Input:} Controllability Gramians $W_1,\ldots, W_n$ in \eqref{eq:horizon_gramian}, $p^{(0)} := (1/n,\ldots, 1/n) \in\mathcal{F}$, and $\varepsilon\geq 0$.\\
 \textbf{Output:} {Approximate VCS.}
    \begin{algorithmic}[1]
    \FOR{$k=0,1,\ldots$}
    \STATE $q^{(k)} := p^{(k)}- \alpha^{(k)} \nabla f(p^{(k)})$, where $\alpha^{(k)}$ is defined by using Algorithm \ref{alg:Armijo}. 
    \STATE $p^{(k+1)} := \Pi_{\Delta}(q^{(k)})$.

        \IF{$\|p^{(k)}-p^{(k+1)}\|\leq \varepsilon$}
        \RETURN $p^{(k+1)}$.
        \ENDIF
    \ENDFOR
    \end{algorithmic}
\end{algorithm}
\end{figure}

\begin{figure}[!t]
\begin{algorithm}[H]
    \caption{Armijo rule along the projection arc}
    \label{alg:Armijo}
    \textbf{Input:} $\sigma,\,\rho\in (0,1)$ and $\alpha>0$.\\
    \textbf{Output:} {Step size.}
    \begin{algorithmic}[1]
    \STATE $\tilde{p}^{(k)}:=\Pi_{\Delta}(p^{(k)}-\alpha \nabla f(p^{(k)}))$.
    \IF{$f(\tilde{p}^{(k)})\leq f(p^{(k)}) + \sigma \nabla f(p^{(k)})^\top (\tilde{p}^{(k)}-p^{(k)})$}
        \RETURN $\alpha^{(k)}:=\alpha$.
    \ELSE
    \STATE $\alpha\leftarrow \rho \alpha$, and go back to step 1.
        \ENDIF
    \end{algorithmic}
\end{algorithm}
\end{figure}


\begin{remark} \label{remark_interior}
Problem \eqref{prob:org} is a special case of the max-det problem studied in \cite{vandenberghe1998determinant}, which can be solved using the path following method.
However, the time complexity at each iteration of the path following method is larger than $O(n^4)$,
because the method uses the Hessian given by \eqref{eq:sndderiv} in addition to the gradient \eqref{eq:deriv}.
In contrast, Algorithm \ref{alg:projgrad} only uses the gradient.
Thus, the time complexity at each iteration of Algorithm \ref{alg:projgrad} is $O(n^3)$.
In Section \ref{Sec6-C},
we compare the execution times of Algorithm~\ref{alg:projgrad}
and the path following method through numerical experiments.
Moreover, the complexity of VCE defined by \eqref{eq:vce} is $O(n^4)$, because it requires calculating the eigenvalues of each $W_i$. That is, the computation of VCE is not more efficient than that of VCS.
\end{remark}

\begin{remark}
For solving controllability scoring problem \eqref{prob:trinv},
$f$ in Algorithms \ref{alg:projgrad} and \ref{alg:Armijo} should be replaced with $g$,
where the gradient $\nabla g(p^{(k)})$ is given by \eqref{gradient_g}.
Similarly to Theorem \ref{main_thm}, we can prove that the modified algorithm outputs the global optimal solution to problem \eqref{prob:trinv}.
Moreover, the time complexity at each iteration of the modified algorithm is the same as that of  Algorithm \ref{alg:projgrad}.
\end{remark}

\begin{remark}
For unstable systems considered in Section \ref{Sec_unstable},
Algorithms \ref{alg:projgrad} and \ref{alg:Armijo} can be modified as follows.
\begin{enumerate}
    \item We replace $W_i$ with $W_i(T)$ in \eqref{eq:finite_time_horizon_gramian}.
    \item We replace in $f$ with $f_T$ in \eqref{def_fT} or with $g_T$ in \eqref{def_gT}.
\end{enumerate}
\end{remark}

\section{Numerical experiments} \label{Sec6}
In this section,
we examine the time performance of Algorithm \ref{alg:projgrad} compared with the path following algorithm proposed in \cite{vandenberghe1998determinant}
using network systems up to $n=1000$.
Moreover, we
compare our proposed VCS and AECS with the existing control centralities VCE (See Remark \ref{Rem_VCE}), ACE (See Remark \ref{Rem_ACE}), and the Control Capacity (CC) proposed in \cite{liu2012control} using a small-size network system with $n=10$.
We here note that
VCS and AECS are not directly comparable with ACE, VCE, and CC. That is, the comparison of numerical quantities lacks significance unless their physical interpretations are taken into account.
We explain in this section that VCS and AECS are shown to be superior than ACE, VCE, and CC in
their abilities to determine the importance of each state node on
controllability under their physical interpretations. 
Throughout all numerical experiments, we used $\varepsilon =10^{-4}$ in Algorithm \ref{alg:projgrad}.

\subsection{Time performance of Algorithm \ref{alg:projgrad}} \label{Sec6-C}
Table \ref{table2} shows the execution time of Algorithm \ref{alg:projgrad} and the path following algorithm in \cite{vandenberghe1998determinant} for the system size $n=200$, $400$, and $1000$.
 The blank column for the path following algorithm means that the calculation was not finished in
one day.
As illustrated in the table, Algorithm 1 was considerably better than the path following method.
This result is consistent with the analysis in Remark \ref{remark_interior}.

\begin{table}[t]
\caption{Computational time (in seconds) of Algorithm 1 and the path following method \cite{vandenberghe1998determinant}. } \label{table2}
  \begin{center}
    \begin{tabular}{|c|c|c|c|} \hline
                 $n$ & $200$  & $400$ & $1000$  \\ \hline 
      Algorithm 1    & $1.112$  & $14.34$  & $72.98$  \\ \hline 
      Path following method \cite{vandenberghe1998determinant}   &  $225.2$  & $5449$  &  \\\hline 
    \end{tabular}
  \end{center}
\end{table}

\subsection{Comparison to other control centralities} \label{Sec6-B}
We considered a stable network with $n=10$, as depicted in Fig.~\ref{fig:erdos_n10}. 
The edge weights are determined by uniform distribution on $[0,1]$.
Here, we omitted the visualizations of self-loops  to make it easier to view the network structure.
That is, we cannot use the simple formula for the CC proposed in \cite{liu2012control}
based on the layer index of each node, because the formula can be applied when the network has no self-loops.

As shown in Table \ref{table:centrality}, the VCS tends to provide the importance of upstream nodes more highly than the AECS.
In other words,
the AECS can give a non-trivial result compared with the VCS, because it is obvious that
upstream nodes in a network with a hierarchical structure are important.
That is, the AECS can be used to determine the value of the importance of each state node in the network, which cannot be decided by the network structure alone.

According to Table \ref{table:centrality}, state node 7 was the most important for the VCS, AECS, and CC.
This seems correct, because node 7 is connected to the most state nodes, as shown in Fig. \ref{fig:erdos_n10}. 
In contrast, node 7 was not important for
the VCE and ACE.
Instead, node 10 was the most important for the VCE, and nodes 5, 6, and 8 were the most important for the ACE.
The results seem strange according to Fig. \ref{fig:erdos_n10}, because nodes 5, 6, and 8 do not influence other nodes and node 10 only affects node 6, which is also influenced by node 4.
However, we can understand that the strange results on the VCE and ACE are due to the definitions, {as mentioned in Remarks \ref{Rem_VCE} and \ref{Rem_ACE}.}


\begin{figure}[t]
    \centering
    \includegraphics[width=4.5cm]{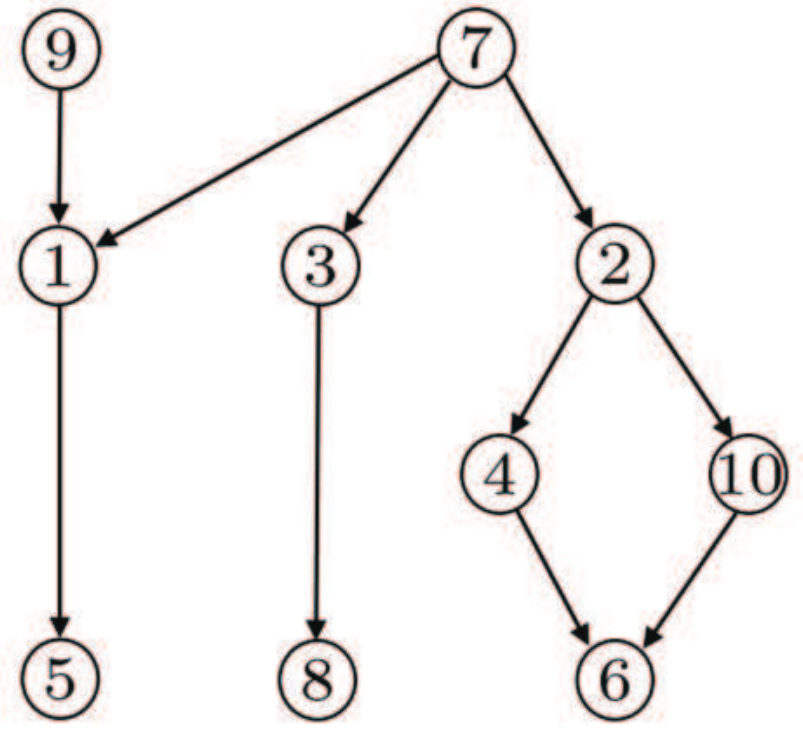}
    \caption{Network structure with $n=10$.}
    \label{fig:erdos_n10}
\end{figure}

\begin{table}[t]
\caption{Control centralities for the system.}
\label{table:centrality}
\centering
\begin{tabular}{c||c|c|c|c|c}
Node $i$ & VCS & AECS & CC & ACE & VCE \\ \hline
1 & 0.000 & 0.138 & 2 & -1.936 & 0.818 \\
2 & 0.151 & 0.202 & 3 & -3.501 & -69.050 \\
3 & 0.187 & 0.200 & 2 & -1.881 & 0.895 \\
4 & 0.005 & 0.051 & 2 & -4.792 & -0.768 \\
5 & 0.000 & 0.000 & 1 & -0.600 & 0.511 \\
6 & 0.000 & 0.000 & 1 & -0.600 & 0.511 \\
7 & {\bf 0.361} & {\bf 0.237} & 9 & -5.524 & -100.395 \\
8 & 0.000 & 0.000 & 1 & -0.600 & 0.511 \\
9 & 0.296 & 0.172 & 3 & -5.315 & -70.905 \\
10 & 0.000 & 0.000 & 2 & -1.795 & 1.031
\end{tabular}
\end{table}

Moreover, it should be noted that although the CC seems to be able to detect the most important node on controllability,
the value of the CC for each node may not be important.
In fact, as shown in Table \ref{table:centrality}, the second largest value of the CC was 3 of nodes 2 and 9, although the largest value was 9 of node 7.
That is, the difference between the first and second values of the CC was considerably large unlike the proposed VCS and AECS.
This suggests that although the controllable subspace dimension of node 9 is larger than those of other nodes,
a large control input energy to control the state is required.
Furthermore, although node 3 was not important for the CC, the node was relatively important for the VCS and AECS.

More concretely,
although the controllable subspace dimension corresponding to node 7 was 9 according to the CC,
the minimum positive eigenvalue of $W_7$ was very small.
Thus, node 7 for the VCE and ACE was not important.
In contrast to the VCE and ACE, the VCS and AECS use information of $W_1,\ldots, W_{10}$ not only of $W_7$ to decide the importance of node 7.
Therefore, VCS and AECS enable us to provide correct scores unlike the ACE, VCE, and CC.

\section{Concluding remarks} \label{sec:conclusion}

We have proposed two novel control centralities called the volumetric and average energy controllability scores, abbreviated by VCS and AECS, respectively.
We have shown that these scores are the unique solutions to convex optimization problems using the controllability Gramians for both stable and unstable cases.
Through numerical experiments, we have demonstrated that the proposed controllability scores and calculating algorithm are better than existing ones.

Using the duality of controllability and observability \cite{kalman1963mathematical},
we can define observability scores, which express the importance of each state node for observing the whole state, by considering 
        $\dot{x}(t) = A x(t)$ and $y(t) =C x(t)$
with a diagonal matrix $C\in {\bb R}^{n\times n}$ as well as the controllability scores using system \eqref{eq:lti}.
That is, our approach provides a novel method for determining sensor placements based on the concept of observability \cite{yan2015spectrum, bopardikar2021randomized, liu2013observability, manohar2022optimal}.

Although we can use 
the VCS and AECS proposed in this paper to consider candidates of control nodes,
it is also important to apply
other metrics.
In fact, for example, as shown in Table \ref{table:centrality} in Section \ref{Sec6},
the CC provided the consistent results with the VCS and AECS.
Because the time complexity for calculating the CC is smaller than those of the VCS and AECS, we can use the CC to know a rough tendency of the importance of each node before checking the VCS and AECS.

The extension of Theorem \ref{Thm_Laplacian} to a directed graph case is non-trivial, because the proof relies on the orthogonal decomposition of a symmetric matrix corresponding to an undirected graph.
Thus, the extension presents an interesting mathematical problem and is left as future work.

Finally, we note that VCS and AECS proposed in this paper can be defined under the assumption that the system matrix $A$ is known.
In other words, if $A$ is unknown, VCS and AECS cannot be defined.
In a future work, we will develop a data-driven method, which is related to \cite{fotiadis2022data}, for calculating VCS and AECS under the assumption that $A$ is unknown.


\section*{Acknowledgment}
This work was supported by the Japan Society for the Promotion of Science KAKENHI under Grant 20K14760 and 23K03899.

\appendix

\subsection{Proof of Theorems \ref{thm:strictlyconvex} and \ref{thm:strictlyconvex2}} \label{Ape_A}

To prove Theorems \ref{thm:strictlyconvex} and \ref{thm:strictlyconvex2}, we show the following.

\begin{lemma} \label{Lem_Wi_independent}
Let $W_i$ $(i=1,\ldots, n)$ be defined as \eqref{eq:horizon_gramian}.
Then, $W_1,\ldots, W_n$ are linearly independent over ${\bb R}$.
\end{lemma}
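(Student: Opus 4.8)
The plan is to prove linear independence of $W_1,\dots,W_n$ by showing that a vanishing real linear combination $\sum_{i=1}^n c_i W_i = O$ forces all $c_i = 0$. The key observation is that $\sum_i c_i W_i = \sum_i c_i \int_0^\infty e^{At} e_i e_i^\top e^{A^\top t}\,\D t = \int_0^\infty e^{At} D e^{A^\top t}\,\D t$ where $D := \diag(c_1,\dots,c_n)$. So the statement reduces to: if $A$ is stable and $\int_0^\infty e^{At} D e^{A^\top t}\,\D t = O$ for a diagonal $D$, then $D = O$.

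**First I would** exploit the Lyapunov-equation characterization. Setting $M := \int_0^\infty e^{At} D e^{A^\top t}\,\D t$, a standard computation (differentiating $t \mapsto e^{At} D e^{A^\top t}$ and integrating, using stability so the boundary term at $\infty$ vanishes) gives $AM + MA^\top = -D$. Hence if $M = O$ then $D = -AM - MA^\top = O$, and therefore every $c_i = 0$. This is the cleanest route and avoids any spectral decomposition of $A$. An equivalent but slightly more hands-on alternative: evaluate the $(i,i)$ diagonal entry of $M$ as $\int_0^\infty \|e_i^\top e^{A^\top t}\|^2 \cdot$(something)$\,\D t$ — actually the $(j,j)$ entry of $e^{At}De^{A^\top t}$ is $\sum_i c_i (e^{At})_{ji}^2$, which need not be sign-definite, so the direct entrywise argument is messier; the Lyapunov route is preferable.

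**The main obstacle** — really the only subtlety — is justifying that the differentiation-under-the-integral / boundary-term argument is valid, i.e. that $e^{At} D e^{A^\top t} \to O$ as $t \to \infty$, which follows immediately from stability of $A$ (all eigenvalues have negative real part, so $\|e^{At}\| \to 0$), and that the integral converges absolutely (same reason). Both are routine given assumption (iii). So the proof is short: introduce $D$, identify $\sum_i c_i W_i$ with the unique solution of the Lyapunov equation $AM + MA^\top = -D$, and conclude $M = O \implies D = O$.

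Concretely I would write:

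\begin{proof}
Suppose $\sum_{i=1}^n c_i W_i = O$ for some $c_1,\dots,c_n \in {\bb R}$, and set $D := \diag(c_1,\dots,c_n)$. Using \eqref{eq:horizon_gramian} and $\sum_{i=1}^n c_i e_i e_i^\top = D$, we obtain
\begin{align*}
O = \sum_{i=1}^n c_i W_i = \int_0^\infty \exp(At)\, D\, \exp(A^\top t)\ \D t =: M.
\end{align*}
Since $A$ is stable, $\|\exp(At)\| \to 0$ as $t\to\infty$, so the integral converges and $\exp(At) D \exp(A^\top t) \to O$ as $t\to\infty$. Differentiating $t\mapsto \exp(At) D \exp(A^\top t)$ and integrating over $[0,\infty)$ then yields $AM + MA^\top = -D$. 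Since $M = O$, it follows that $D = O$, i.e. $c_1 = \cdots = c_n = 0$. Hence $W_1,\dots,W_n$ are linearly independent over ${\bb R}$. \qed
\end{proof}
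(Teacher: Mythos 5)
Your proof is correct and follows essentially the same route as the paper: sum the individual Lyapunov equations $AW_i + W_iA^\top = -e_ie_i^\top$ to get $AM + MA^\top = -D$ for the combination $M = \sum_i c_i W_i$ with $D = \diag(c_1,\dots,c_n)$, and conclude $M = O \Rightarrow D = O$. The paper states this more tersely by citing \eqref{eq:lyapunov} directly, whereas you additionally justify the boundary-term vanishing from stability, but the argument is identical.
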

\begin{proof}
The linear independence of $W_1,\ldots, W_n$ is equivalent to that
 $W_{\rm con}(x)=O$ yields $x=0$,
because $W_{\rm con}(\cdot)$ is defined as \eqref{eq:pgramian}.
From \eqref{eq:lyapunov}, each controllability Gramian $W_i$ satisfies $A W_i + W_i A^\top = -e_i e_i^\top$,
and thus
\begin{align}
    A W_{\rm con}(x) + W_{\rm con}(x) A^\top = -\diag(x_1,\dots, x_n).
\end{align}
Therefore, $W_{\rm con}(x)=O$ implies $x=0$.\qed
\end{proof}

{\it Proof of Theorem \ref{thm:strictlyconvex}:}
Suppose $p\in \mathcal{F}\subseteq X$. From \eqref{eq:sndderiv}, for all $x \in \Real^n\setminus\{0\}$, 
\begin{align}
    x^\top (\nabla^2 f(p))x 
     = \tr(G(p, x)^2), \label{eq:quadtrace}
\end{align}
where 
\begin{align}
   G(p, x) := W_{\rm con}(p)^{-\frac{1}{2}}W_{\rm con}(x)W_{\rm con}(p)^{-\frac{1}{2}}. \label{eq:defofG}
\end{align}
Since both $W_{\rm con}(p)$ and $W_{\rm con}(x)$ are symmetric matrices, $G(p,x)$ is a symmetric matrix and $G(p, x)^2$ is a positive-semidefinite matrix. Thus, $x^\top (\nabla^2 f(p)) x\geq 0$ and $f(p)$ is convex on $\mathcal{F}$.

To prove the strict convexity of $f(p)$ on $\mathcal{F}$, we suppose that $x^\top (\nabla^2 f(p)) x = 0$.
Then, \eqref{eq:quadtrace} yields $\tr\left( G(p, x)^2\right) = 0$, and thus $G(p, x) = 0$. 
From the nonsingularity of $W_{\rm con}(p)$ on $\mathcal{F}$ and \eqref{eq:defofG}, $W_{\rm con}(x) = 0$, which implies
$x=0$ from Lemma \ref{Lem_Wi_independent}.
Thus, $f(p)$ is strictly convex on $\mathcal{F}$.
\qed

{\it Proof of Theorem \ref{thm:strictlyconvex2}:}
Suppose $p\in \mathcal{F}\subseteq X$. From \eqref{hessian_g}, for all $x \in \Real^n\setminus\{0\}$, 
\begin{align}
     x^\top (\nabla^2 g(p))x  
     = 2{\rm tr} \left( W_{\rm con}(p)^{-1/2}G(p,x)^2 W_{\rm con}^{-1/2}(p) \right) \geq 0, \label{quad_hessian_g}
\end{align}
where $G(p,x)$ is defined as \eqref{eq:defofG}.
Thus, $g(p)$ is convex on $\mathcal{F}$.

To prove the strict convexity of $g(p)$ on $\mathcal{F}$,
we suppose that $x^{\top} (\nabla^2 g(p)) x=0$.
Then,
\eqref{quad_hessian_g} implies
\begin{align}
    W_{\rm con}(p)^{-1} G(p,x)^2=0, \label{quad_hessian_g2}
\end{align}
because $x^{\top}(\nabla^2 g(p))x=2{\rm tr}(W_{\rm con}(p)^{-1} G(p,x)^2)$, $W_{\rm con}(p)^{-1}\succ O$, and $G(p,x)^2\succeq O$.
Thus, it follows from \eqref{quad_hessian_g2} that
\begin{align}
    W_{\rm con}(x)W_{\rm con}(p)^{-1}W_{\rm con}(x) = 0. \label{kaname}
\end{align}
Because $W_{\rm con}(p)\succ O$, \eqref{kaname} implies $W_{\rm con}(x)=0$,
which yields $x=0$ from Lemma \ref{Lem_Wi_independent}. 
Thus, $g(p)$ is strictly convex on $\mathcal{F}$.
\qed

\subsection{Proof of Lemma \ref{Lem_F0} and Theorem \ref{thm:unique_existence}} \label{Ape_B}

{\it Proof of Lemma \ref{Lem_F0}:}
First, we show that \eqref{eq_F0} holds.
Let $Y:=\{p\in \Real^n\mid f(p) \leq f(p^{(0)})\}$ and $p\in Y \cap \Delta$.
Then, 
\begin{align}
f(p)\leq f(p^{(0)}). \label{f_sub_level}
\end{align}
If $p\not\in X$, the value of $f$ is not defined.
This is a contradiction to \eqref{f_sub_level}.
Thus, $p\in X$, which means that $\mathcal{F}_0 \supset Y\cap \Delta$.
The converse clearly holds from the definition of $\mathcal{F}$. 

The boundedness of $\mathcal{F}_0$ follows from that of $\Delta$, and
the convexity follows from that of $\Delta$ and $Y$.
So, we show that $\mathcal{F}_0$
is a closed set in ${\bb R}^n$.
The set $\Delta$ is a closed set in ${\bb R}^n$.
Thus, if $Y$ is closed, $\mathcal{F}_0$ is also closed.
To show that $Y$ is closed,
we take any sequence $\{p_k\}$ converges to $p\in {\bb R}^n$.
By the definition of $Y$, $\{p_k\}\subset X$.
The continuity of the objective function $f$ on $X$ implies
$f(p)= f(\lim_{k\rightarrow \infty} p_k) = \lim_{k\rightarrow \infty} f(p_k)\leq f(p^{(0)})$.
This means that $p\in Y$,
and thus $Y$ is a closed set in ${\bb R}^n$.
This completes the proof.
\qed

{\it Proof of Theorem \ref{thm:unique_existence}:}
From Lemma \ref{Lem_F0}, $\mathcal{F}_0$ is a closed and bounded set in ${\bb R}^n$.
Moreover, the function $f$ is continuous on $\mathcal{F}_0$.
Therefore, problem \eqref{prob:org2} has an optimal solution, as shown in \cite[Proposition A.8]{bertsekas2016nonlinear}.
Furthermore, Theorem \ref{thm:strictlyconvex} and Lemma \ref{Lem_F0} guarantee that $f$ is strictly convex and $\mathcal{F}_0$ is a convex set in ${\bb R}^n$.
This means that an optimal solution to problem \eqref{prob:org2} is unique, as shown in \cite[Proposition 1.1.2]{bertsekas2016nonlinear}. \qed

\subsection{Proof of Theorems \ref{thm:unstable_realeigen} and \ref{Thm_Laplacian}} \label{Ape_C}

To prove Theorem \ref{thm:unstable_realeigen},
we prepare the following Lemma and its corollary.

\begin{lemma} \label{Lem_Sylvester}
Let $A\in {\bb R}^{m\times m}$ and $B\in {\bb R}^{n\times n}$.
The Sylvester equation
$AX-XB = C$
    has a unique solution for any $C\in {\bb R}^{m\times n}$ if and only if $A$ and $B$ do not have a common eigenvalue.
\end{lemma}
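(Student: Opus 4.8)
The plan is to reduce the Sylvester equation to a linear map on the space of matrices and analyze its spectrum via the Kronecker product. Define the linear operator $\mathcal{S}\colon {\bb R}^{m\times n}\to {\bb R}^{m\times n}$ by $\mathcal{S}(X) := AX - XB$. The claim that $\mathcal{S}(X)=C$ has a unique solution for every $C$ is precisely the statement that $\mathcal{S}$ is invertible, i.e.\ that $0$ is not an eigenvalue of $\mathcal{S}$. So the whole lemma comes down to computing the eigenvalues of $\mathcal{S}$ and showing they are exactly the numbers $\lambda_i - \mu_j$, where $\{\lambda_i\}$ are the eigenvalues of $A$ and $\{\mu_j\}$ are the eigenvalues of $B$ (over $\Complex$, with multiplicity).

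First I would vectorize: using the identity $\mathrm{vec}(AXB') = (B'^{\top}\otimes A)\,\mathrm{vec}(X)$, the operator $\mathcal{S}$ becomes the matrix $M := (I_n \otimes A) - (B^{\top}\otimes I_m)$ acting on $\mathrm{vec}(X)\in {\bb R}^{mn}$. Thus $\mathcal{S}$ is invertible if and only if $M$ is nonsingular. Next I would compute the spectrum of $M$. Since $I_n\otimes A$ and $B^{\top}\otimes I_m$ commute, their difference has eigenvalues obtained by subtracting eigenvalues of the two summands along a common (generalized) eigenbasis; concretely, the eigenvalues of $I_n\otimes A$ are the $\lambda_i$ each repeated $n$ times, the eigenvalues of $B^{\top}\otimes I_m$ are the $\mu_j$ each repeated $m$ times, and because the two matrices are simultaneously triangularizable (e.g.\ via Schur forms of $A$ and $B^{\top}$, tensored appropriately), the eigenvalues of $M$ are precisely $\{\lambda_i - \mu_j : 1\le i\le m,\ 1\le j\le n\}$. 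Therefore $M$ is nonsingular if and only if $\lambda_i \neq \mu_j$ for all $i,j$, which is exactly the condition that $A$ and $B$ share no common eigenvalue.

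The main obstacle — really the only nonroutine point — is justifying the eigenvalue computation for the difference of two commuting Kronecker factors cleanly, i.e.\ ruling out cancellations or multiplicity subtleties. I would handle this by invoking simultaneous triangularization: take unitary $U,V$ with $U^{*}AU = T_A$ and $V^{*}B^{\top}V = T_B$ upper triangular; then $(V\otimes U)^{*} M (V\otimes U) = (I_n\otimes T_A) - (T_B\otimes I_m)$ is upper triangular with diagonal entries exactly the $\lambda_i - \mu_j$, so these are the eigenvalues of $M$ counted with algebraic multiplicity, and nonsingularity is equivalent to all of them being nonzero. (Alternatively one can avoid spectra entirely for the ``if'' direction by the standard argument that $\mathcal{S}(X)=0$ forces $p(A)X = X p(B)$ for every polynomial $p$; choosing $p$ to be the characteristic polynomial of $B$ gives $p(A)X = 0$, and $p(A)$ is invertible when $A,B$ share no eigenvalue, so $X=0$ — injectivity on a finite-dimensional space then gives bijectivity. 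The ``only if'' direction follows by exhibiting, when $\lambda=\mu$ is a common eigenvalue with right eigenvector $v$ of $A$ and left eigenvector $w^{\top}$ of $B$, the nonzero $X = v w^{\top}$ in the kernel of $\mathcal{S}$.) I would present the polynomial argument as the primary proof since it is the most self-contained, and remark on the Kronecker-product viewpoint for the eigenvalue description.
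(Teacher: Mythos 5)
Your proof is correct. Note that the paper does not actually prove this lemma at all: its ``proof'' is a one-line citation to Horn and Johnson (Theorem 2.4.4.1), so you have supplied the standard argument that the paper outsources. Both of your routes are sound. The Kronecker-product computation correctly identifies the matrix of $X\mapsto AX-XB$ as $M=(I_n\otimes A)-(B^{\top}\otimes I_m)$, and the simultaneous Schur triangularization $(V\otimes U)^{*}M(V\otimes U)=(I_n\otimes T_A)-(T_B\otimes I_m)$ legitimately exhibits the spectrum as $\{\lambda_i-\mu_j\}$, so nonsingularity is exactly the no-common-eigenvalue condition. The polynomial/Cayley--Hamilton argument for injectivity ($AX=XB$ forces $p(A)X=Xp(B)$, and $p(A)$ is invertible when $p$ is the characteristic polynomial of $B$ and the spectra are disjoint) is also complete, with injectivity giving bijectivity in finite dimensions. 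The only point worth a sentence of care, since the lemma is stated over $\Real$: in the ``only if'' direction a common eigenvalue $\lambda$ and the vectors $v,w$ may be complex, so $vw^{\top}$ lies a priori only in the complex kernel; either pass to real and imaginary parts, or simply observe that the real matrix $M$ has $\det M=\prod_{i,j}(\lambda_i-\mu_j)=0$ and is therefore singular as a real matrix. Your Kronecker formulation already covers this, so the gap is cosmetic rather than substantive.
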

\begin{proof}
This is a well-known result. See \cite[Theorem 2.4.4.1]{horn2012matrix}. \qed
\end{proof}

Lemma \ref{Lem_Sylvester} immediately implies the following corollary.

\begin{corollary} \label{Cor_Sylvester}
    Assume that  $A$ and $-A$ of system \eqref{eq:lti} do not have a common eigenvalue.
    Then, 
    \begin{align}
        X=O\quad \Leftrightarrow\quad AX+XA^{\top} = O. \label{X=O_equiv}
    \end{align}
\end{corollary}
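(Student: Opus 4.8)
The final statement to prove is Corollary \ref{Cor_Sylvester}: under the assumption that $A$ and $-A$ share no common eigenvalue, we have $X = O \Leftrightarrow AX + XA^{\top} = O$.

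The plan is to derive this directly from Lemma \ref{Lem_Sylvester} by recognizing $AX + XA^{\top}$ as a Sylvester operator. The forward implication ($X = O \Rightarrow AX + XA^{\top} = O$) is trivial, so the content is the reverse implication. First I would rewrite the equation $AX + XA^{\top} = O$ in the standard Sylvester form $AX - XB = C$ from Lemma \ref{Lem_Sylvester}. Setting $B := -A^{\top}$ and $C := O$, the equation becomes $AX - X(-A^{\top}) = O$, i.e., $AX - XB = C$ with these choices. Lemma \ref{Lem_Sylvester} then applies with $m = n$ (both matrices are $n \times n$ here), and it guarantees a \emph{unique} solution $X$ precisely when $A$ and $B = -A^{\top}$ have no common eigenvalue.

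The key observation is that the eigenvalues of $-A^{\top}$ are exactly the eigenvalues of $-A$, since $A^{\top}$ and $A$ have the same spectrum (they share the same characteristic polynomial) and negation of a matrix negates its spectrum. Hence the hypothesis that $A$ and $-A$ have no common eigenvalue is equivalent to $A$ and $-A^{\top}$ having no common eigenvalue, which is exactly the condition needed for Lemma \ref{Lem_Sylvester} to yield uniqueness. Since $X = O$ is evidently \emph{one} solution of the homogeneous equation $AX + XA^{\top} = O$ (the case $C = O$), uniqueness forces it to be the \emph{only} solution. Therefore $AX + XA^{\top} = O$ implies $X = O$, completing the equivalence \eqref{X=O_equiv}.

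I do not anticipate a genuine obstacle here — the corollary is essentially a specialization of Lemma \ref{Lem_Sylvester}. The only point requiring a word of care is the spectral identity $\sigma(-A^{\top}) = \sigma(-A) = -\sigma(A)$, so that the stated hypothesis on $A$ and $-A$ translates correctly into the hypothesis of the lemma; once that translation is made, the argument is a one-line application of uniqueness to the homogeneous equation.
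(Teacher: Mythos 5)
Your proposal is correct and follows essentially the same route as the paper's (commented-out) proof: the forward direction is trivial, and the converse is the one-line application of Lemma \ref{Lem_Sylvester} to $AX - X(-A^{\top}) = O$, using that $\sigma(-A^{\top}) = \sigma(-A)$ so the stated hypothesis matches the lemma's. Nothing is missing.
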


\textcolor{black}{Corollary \ref{Cor_Sylvester} means that the assumption of Theorem \ref{thm:unstable_realeigen} is a sufficient condition for \eqref{X=O_equiv} to be satisfied.}

{\it Proof of Theorem \ref{thm:unstable_realeigen}:}
     It is sufficient to show that the controllability Gramian $W_1(T),\ldots, W_n(T)$ are linearly independent over ${\bb R}$,
     because this implies the strict convexities of $f_T(p)$ and $g_T(p)$ in the same way
     as the proofs of Theorems \ref{thm:strictlyconvex} and \ref{thm:strictlyconvex2}, respectively.
     As a result, in the same way as Theorem \ref{thm:unique_existence}, the linear independence guarantees that there exists the unique solution to
    finite-time controllability scoring problem \eqref{prob:unstable}.

     The linear independence of $W_1(T),\ldots, W_n(T)$ is equivalent to that $W_{\rm con}(x,T)=O$ yields $x=0$, since $W_{\rm con}(p,T)$ is defined as \eqref{Def_Wc}. The fundamental theorem of calculus, that is,
     $F(T) - F(0) = \int_0^T \frac{ \D F}{\D t}(\tau) \D \tau$
     with $F(t):=\E^{AT}e_ie_i^\top \E^{A^\top T}$,
     implies that $W_i(T)$ satisfies the Lyapunov equation
    \begin{align}
        \label{eq:finite_horizon_lyapunov}
        AW_i(T) + W_i(T) A^\top = \E^{AT} e_i e_i^\top \E^{A^\top T} - e_i e_i^\top.
    \end{align}
    Thus, multiplying $x_i$ to both sides of \eqref{eq:finite_horizon_lyapunov} and adding them for $i = 1,\dots, n$, we get
    \begin{align}
        \label{eq:finite_lyapunov}
         AW_{\rm con}(x, T) + W_{\rm con}(x, T) A^\top =
         \E^{AT} \diag(x) \E^{A^\top T} - \diag(x).
    \end{align}   
  \textcolor{black}{Corollary \ref{Cor_Sylvester} yields}
$W_{\rm con}(x, T) = O$ if and only if $AW_{\rm con}(x, T) + W_{\rm con}(x, T) A^\top = O$.
    This and \eqref{eq:finite_lyapunov} imply that $W_{\rm con}(x,T)=O$ if and only if
    \begin{align}
        \label{eq:finite_lyapunov_right}
        \E^{AT} \diag(x)  - \diag(x) \E^{-A^\top T}= O.
    \end{align}
 Because
 $A$ and $-A$ of system \eqref{eq:lti} do not have a common eigenvalue,
 $\E^{AT}$ and $\E^{-A^\top T}$ do not also share any eigenvalue.
 Thus, Corollary \ref{Cor_Sylvester} implies that
 \eqref{eq:finite_lyapunov_right} holds if and only if $x=0$.
 Therefore, $W_{\rm con}(x,T)=0$ yields $x=0$. \qed

 {\it Proof of Theorem \ref{Thm_Laplacian}:}
    As in the proof of Theorem \ref{thm:unstable_realeigen}, it is sufficient to show that
    \begin{align}
        \label{eq:finite_lyapunov_right2}
        \E^{AT} \diag(x)  - \diag(x) \E^{-A^\top T}= O
    \end{align}
    has the unique solution $x = 0$. Eq.~\eqref{eq:finite_lyapunov_right2} can be rewritten as
    \begin{align}
        p_{ij} x_j - q_{ij} x_i &= 0 \quad (i\neq j), \label{eq:nondiag_eq} \\
        (p_{ii} - q_{ii}) x_i &= 0 \quad (i= j), \label{eq:diag_eq}
    \end{align}
    where $p_{ij} := (\E^{AT})_{ij}$ and $q_{ij} := (\E^{-A^\top T})_{ij}$.
    
    We first show that $p_{ij} = (e^{AT})_{ij} > 0$ for all $i, j\in\{1,\dots, n\}$. From the definition of $A = -L$, there exists $\alpha > 0$ such that $P:= A+\alpha I$ is a nonnegative matrix.
    Thus, $\E^{AT} = \E^{PT} \E^{-\alpha T}$, and it follows from \cite[Theorem 1.1.2]{bapat1997nonnegative} that the connectivity of $G$ implies that $\E^{PT}$ is a positive matrix. This means $p_{ij} > 0$ for all $i, j\in\{1,\dots, n\}$, and \eqref{eq:nondiag_eq} implies that 
    \begin{align}
        x_j = (q_{ij}/p_{ij}) x_i \quad (i\ne j) \label{eq:relation_xi_xj}
    \end{align}
    
    The orthogonal decomposition of the symmetric matrix $L$ yields $L = U^\top DU$,
    where $U=(u_{ij})$ is an orthogonal matrix, and $D = \diag(\lambda_1,\dots, \lambda_n)$ is a diagonal matrix with eigenvalues of $L$\ ($\lambda_1 = 0< \lambda_2 \leq \dots \leq \lambda_n$).
    Then, we have
        $p_{ii}  = (\E^{AT})_{ii} = \sum_{k=1}^n u_{ik}^2 \E^{-\lambda_k T}$,
        $q_{ii} = (\E^{-A^\top T})_{ii} = \sum_{k=1}^n u_{ik}^2 \E^{\lambda_k T}$,
    and
    $q_{ii} - p_{ii}  = \sum_{k=1}^n u_{ik}^2 \left( \E^{\lambda_k T} - \E^{-\lambda_k T} \right) \geq 0$.
    Suppose that $q_{ii} - p_{ii} = 0$ for any $i\in\{1,\dots, n\}$. Then, we have $u_{ik} = 0\ (k\ne 1)$, which contradicts the nonsingularity of $(u_{ij})$. 
    Therefore, there is $i\in\{1,\dots, n\}$ such that $q_{ii} \ne p_{ii}$ and \eqref{eq:diag_eq} yields $x_i = 0$.
    Combining $x_i = 0$ and \eqref{eq:relation_xi_xj}, we have $x_1=\dots=x_n=0$. \qed

\subsection{Proof of Theorem \ref{main_thm}} \label{Ape_E}

    We define $\alpha^{(k)}$ in step 2 of Algorithm \ref{alg:projgrad} using Algorithm \ref{alg:Armijo}.
    As shown in \cite[Proposition 3.3.3]{bertsekas2016nonlinear},
    there exists $\alpha^{(k)}>0$ such that
    \begin{align}
        f({p}^{(k+1)})\leq f(p^{(k)}) + \sigma \nabla f(p^{(k)})^\top ({p}^{(k+1)}-p^{(k)}). \label{f_evaluation}
    \end{align}
    Note that $\nabla f(p^{(k)})^\top ({p}^{(k+1)}-p^{(k)})\leq 0$, because
    $(p^{(k)}-p^{(k+1)})^\top (p^{(k)}-\alpha^{(k)}\nabla f(p^{(k)}) - p^{(k+1)})\leq 0$, which follows from the projection theorem described in \cite[Proposition 1.1.4]{bertsekas2016nonlinear}.
    Thus, \eqref{f_evaluation} yields
        $f(p^{k+1}) \leq f(p^{(k)})$,
    which means that 
    \begin{align}
    p^{(k)}\in \mathcal{F}_0 \subset \mathcal{F}\quad (k=0,1,\ldots), \label{p_F0}
    \end{align}
     where $\mathcal{F}_0$ is defined as \eqref{def:F0}.

Lemma \ref{Lem_F0} and \eqref{p_F0} imply
$\Pi_{\mathcal{F}_0}(p^{(k)}- \alpha^{(k)} \nabla f(p^{(k)})) = \Pi_{\Delta}(p^{(k)}- \alpha^{(k)} \nabla f(p^{(k)}))$,    
where $\Pi_{\mathcal{F}_0}$ denotes the projection onto $\mathcal{F}_0$.
That is, Algorithm~\ref{alg:projgrad} is a projected gradient method for solving
 convex optimization problem \eqref{prob:org2}.
    Therefore, \cite[Theorem 2]{iusem2003convergence} guarantees that \eqref{convergence_score} holds.\qed





\bibliographystyle{IEEEtran}

\begin{thebibliography}{10}
\bibitem{chaharborj2022controlling}
S.~S. Chaharborj, K.~N. Nabi, K.~L. Feng, S.~S. Chaharborj, and P.~S. Phang, ``{Controlling COVID-19 transmission with isolation of influential nodes},'' \emph{Chaos, Solitons \& Fractals}, p. 112035, 2022.

\bibitem{zhu2021connectedness}
S.~Zhu, M.~Kou, F.~Lai, Q.~Feng, and G.~Du, ``The connectedness of the coronavirus disease pandemic in the world: A study based on complex network analysis,'' \emph{Frontiers in Physics}, vol.~8, p. 642, 2021.

\bibitem{tang2018colloquium}
E.~Tang and D.~S. Bassett, ``Colloquium: Control of dynamics in brain networks,'' \emph{Reviews of modern physics}, vol.~90, no.~3, p. 031003, 2018.

\bibitem{ikeda2018sparsity}
T.~Ikeda and K.~Kashima, ``Sparsity-constrained controllability maximization with application to time-varying control node selection,'' \emph{IEEE Control Systems Letters}, vol.~2, no.~3, pp. 321--326, 2018.

\bibitem{romao2018distributed}
L.~Romao, K.~Margellos, and A.~Papachristodoulou, ``Distributed actuator selection: achieving optimality via a primal-dual algorithm,'' \emph{IEEE control systems letters}, vol.~2, no.~4, pp. 779--784, 2018.

\bibitem{sato2020controllability}
K.~Sato and A.~Takeda, ``Controllability maximization of large-scale systems using projected gradient method,'' \emph{IEEE Control Systems Letters}, vol.~4, no.~4, pp. 821--826, 2020.

\bibitem{she2021energy}
B.~She, S.~Mehta, C.~Ton, and Z.~Kan, ``Energy-related controllability of signed complex networks with laplacian dynamics,'' \emph{IEEE Transactions on Automatic Control}, vol.~66, no.~7, pp. 3325--3330, 2021.

\bibitem{summers2015submodularity}
T.~H. Summers, F.~L. Cortesi, and J.~Lygeros, ``On submodularity and controllability in complex dynamical networks,'' \emph{IEEE Transactions on Control of Network Systems}, vol.~3, no.~1, pp. 91--101, 2016.

\bibitem{summers2016convex}
T.~Summers and I.~Shames, ``Convex relaxations and gramian rank constraints for sensor and actuator selection in networks,'' in \emph{2016 IEEE International Symposium on Intelligent Control (ISIC)}, 2016, pp. 1--6.

\bibitem{fitch2016optimal}
K.~Fitch and N.~E. Leonard, ``Optimal leader selection for controllability and robustness in multi-agent networks,'' in \emph{2016 European Control Conference (ECC)}.\hskip 1em plus 0.5em minus 0.4em\relax IEEE, 2016, pp. 1550--1555.

\bibitem{lindmark2021centrality}
G.~Lindmark and C.~Altafini, ``Centrality measures and the role of non-normality for network control energy reduction,'' \emph{IEEE Control Systems Letters}, vol.~5, no.~3, pp. 1013--1018, 2021.

\bibitem{pasqualetti2014controllability}
F.~Pasqualetti, S.~Zampieri, and F.~Bullo, ``Controllability metrics, limitations and algorithms for complex networks,'' \emph{IEEE Transactions on Control of Network Systems}, vol.~1, no.~1, pp. 40--52, 2014.

\bibitem{bof2016role}
N.~Bof, G.~Baggio, and S.~Zampieri, ``On the role of network centrality in the controllability of complex networks,'' \emph{IEEE Transactions on Control of Network Systems}, vol.~4, no.~3, pp. 643--653, 2017.

\bibitem{rodrigues2019network}
F.~A. Rodrigues, ``Network centrality: an introduction,'' \emph{A mathematical modeling approach from nonlinear dynamics to complex systems}, pp. 177--196, 2019.

\bibitem{olshevsky2014minimal}
A.~Olshevsky, ``Minimal controllability problems,'' \emph{IEEE Transactions on Control of Network Systems}, vol.~1, no.~3, pp. 249--258, 2014.

\bibitem{olshevsky2015minimum}
A.~Olshevsky, ``Minimum input selection for structural controllability,'' in \emph{2015 American Control Conference (ACC)}.\hskip 1em plus 0.5em minus 0.4em\relax IEEE, 2015, pp. 2218--2223.

\bibitem{olshevsky2017non}
A.~Olshevsky, ``On (non) supermodularity of average control energy,'' \emph{IEEE Transactions on Control of Network Systems}, vol.~5, no.~3, pp. 1177--1181, 2017.

\bibitem{vandenberghe1998determinant}
L.~Vandenberghe, S.~Boyd, and S.-P. Wu, ``Determinant maximization with linear matrix inequality constraints,'' \emph{SIAM journal on matrix analysis and applications}, vol.~19, no.~2, pp. 499--533, 1998.

\bibitem{liu2012control}
Y.-Y. Liu, J.-J. Slotine, and A.-L. Barab{\'a}si, ``Control centrality and hierarchical structure in complex networks,'' \emph{Plos ONE}, vol.~7, no.~9, p. e44459, 2012.

\bibitem{dullerud2013course}
G.~E. Dullerud and F.~Paganini, \emph{A course in robust control theory: a convex approach}.\hskip 1em plus 0.5em minus 0.4em\relax Springer Science \& Business Media, 2013.

\bibitem{vishnoi2021}
N.~K. Vishnoi, \emph{Algorithms for convex optimization}.
Cambridge University Press, 2021.

\bibitem{abbas2020tradeoff}
W.~Abbas, M.~Shabbir, A.~Y. Yaz{\i}c{\i}o{\u{g}}lu, and A.~Akber, ``Tradeoff between controllability and robustness in diffusively coupled networks,'' \emph{IEEE Transactions on Control of Network Systems}, vol.~7, no.~4, pp. 1891--1902, 2020.

\bibitem{bertsekas2016nonlinear}
D.~P. Bertsekas, \emph{Nonlinear Programming, Third Edition}.\hskip 1em plus 0.5em minus 0.4em\relax Athena Scientific, 2016.

\bibitem{condat2016fast}
L.~Condat, ``Fast projection onto the simplex and the {$\mathbf{L}_1$} ball,'' \emph{Mathematical Programming}, vol. 158, no.~1, pp. 575--585, 2016.

\bibitem{bartels1972solution}
R.~H. Bartels and G.~W. Stewart, ``Solution of the matrix equation {$AX+XB=C$},'' \emph{Communications of the ACM}, vol.~15, no.~9, pp. 820--826, 1972.

\bibitem{li2002low}
J.-R. Li and J.~White, ``{Low rank solution of Lyapunov equations},'' \emph{SIAM Journal on Matrix Analysis and Applications}, vol.~24, no.~1, pp. 260--280, 2002.

\bibitem{kalman1963mathematical}
R.~E. Kalman, ``Mathematical description of linear dynamical systems,'' \emph{Journal of the Society for Industrial and Applied Mathematics, Series A: Control}, vol.~1, no.~2, pp. 152--192, 1963.

\bibitem{yan2015spectrum}
G.~Yan, G.~Tsekenis, B.~Barzel, J.-J. Slotine, Y.-Y. Liu, and A.-L. Barab{\'a}si, ``Spectrum of controlling and observing complex networks,'' \emph{Nature Physics}, vol.~11, no.~9, pp. 779--786, 2015.

\bibitem{bopardikar2021randomized}
S.~D. Bopardikar, ``A randomized approach to sensor placement with observability assurance,'' \emph{Automatica}, vol. 123, p. 109340, 2021.

\bibitem{liu2013observability}
Y.-Y. Liu, J.-J. Slotine, and A.-L. Barab{\'a}si, ``Observability of complex systems,'' \emph{Proceedings of the National Academy of Sciences}, vol. 110, no.~7, pp. 2460--2465, 2013.

\bibitem{manohar2022optimal}
K.~Manohar, J.~N. Kutz, and S.~L. Brunton, ``Optimal sensor and actuator selection using balanced model reduction,'' \emph{IEEE Transactions on Automatic Control}, vol.~67, no.~4, pp. 2108--2115, 2022.

\bibitem{fotiadis2022data}
F.~Fotiadis, K.~G. Vamvoudakis, and Z.-P. Jiang, ``Data-based actuator selection for optimal control allocation,'' in \emph{IEEE 61st Conference on Decision and Control (CDC)}, 2022, pp. 4674--4679.

\bibitem{horn2012matrix}
R.~A. Horn and C.~R. Johnson, \emph{Matrix analysis}.\hskip 1em plus 0.5em minus 0.4em\relax Cambridge university press, 2012.

\bibitem{bapat1997nonnegative}
R.~B. Bapat, R.~B. Bapat, T.~Raghavan \emph{et~al.}, \emph{Nonnegative matrices and applications}.\hskip 1em plus 0.5em minus 0.4em\relax Cambridge university press, 1997, no.~64.

\bibitem{iusem2003convergence}
A.~N. Iusem, ``On the convergence properties of the projected gradient method for convex optimization,'' \emph{Computational \& Applied Mathematics}, vol.~22, pp. 37--52, 2003.

\end{thebibliography}
\end{document}